\documentclass[11pt]{amsart}

\RequirePackage[l2tabu,orthodox]{nag}
\usepackage[T1]{fontenc}
\usepackage[utf8]{inputenc}
\usepackage{lmodern}
\usepackage[english]{babel}
\usepackage{color} 
\usepackage[usenames,dvipsnames]{xcolor}
\usepackage{amsrefs,amsthm,amssymb,amsmath}
\usepackage[pdftex]{graphicx}
\usepackage{enumerate}
\usepackage[inline]{enumitem}
\usepackage{url}
\usepackage{stmaryrd}

\usepackage{multicol}
\usepackage{mathrsfs}

\usepackage[foot]{amsaddr}

\numberwithin{equation}{section}

\let\oldtocsection=\tocsection

\let\oldtocsubsection=\tocsubsection 

\let\oldtocsubsubsection=\tocsubsubsection

\renewcommand{\tocsection}[2]{\hspace{0em}\oldtocsection{#1}{#2}}
\renewcommand{\tocsubsection}[2]{\hspace{2em}\oldtocsubsection{#1}{#2}}
\renewcommand{\tocsubsubsection}[2]{\hspace{4em}\oldtocsubsubsection{#1}{#2}}

\newtheorem{thm}{Theorem}[section]

\newtheorem{lem}[thm]{Lemma}
\newtheorem{prop}[thm]{Proposition}
\newtheorem{cor}[thm]{Corollary}
\newtheorem{intro-cor}[introthm]{Corollary}

\theoremstyle{definition}
\newtheorem{dfn}[thm]{Definition}

\newtheorem*{main question}{Main Question}

\theoremstyle{remark}
\newtheorem{rem}[thm]{Remark}

\usepackage[margin=3cm]{geometry}

\usepackage{indentfirst}
\usepackage{microtype}

\usepackage[colorlinks=true,linkcolor=blue,citecolor=magenta]{hyperref}

\usepackage{blindtext}

\usepackage{nameref}

\DeclareMathOperator{\homeo}{\mathsf{Homeo}}

\newcommand{\vs}{\vspace{0,5cm}}
\newcommand{\vsc}{\vspace{0,3cm}}

\newcommand{\id}{\mathsf{id}}
\newcommand{\R}{\mathbb{R}}

\newcommand{\Z}{\mathbb Z}

\renewcommand{\setminus}{\smallsetminus}

\renewcommand{\emptyset}{\varnothing}

\title{Free product of bi-orderable group is bi-orderable: A simple dynamical proof}
\author{Juan Alonso and Crist\'obal Rivas}
\date{\today}

\begin{document}

\begin{abstract} 
We give a new  proof of Vinogradov's theorem asserting   that free product of  biorderable groups is biorderable. Our proof relies on a simple dynamical construction that provides a bi-ordering of the free product that extends the bi-orderings on the factors. Furthermore, if the bi-orders of the factors are invariant under some automorphisms, then the bi-ordering on the product turns out to be invariant under the product automorphism.

\end{abstract}

\maketitle

\section{Introduction}

We say that a group is {\em left-orderable} if it supports a total ordering which is invariant under left   multiplication and {\em bi-orderable} if the ordering is also invariant under right multiplication (see \S \ref{preliminars} for precisions). We use the term {\em orderable} when there is no harm in the ambiguity. In 1949, Vinogradov \cite{vino} used groups of positive units in ordered rings to show  that the free product of two bi-ordered groups admits a bi-ordering  that extends the bi-orders of the factors. This approach was latter simplified Johnson \cite{johnson} in 1972, and even further   by Bergman \cite{bergman} in 1990 who considered specific rings of matrices over groups-rings (see also \cite[\S2.1.2]{GOD}). 
Recently, Muliarchyk \cite{muliarchyk2} obtained the same result but with a completely different method which considers group actions by homeomorphisms of the real line. Unfortunately, Muliarchyk's approach dependes heavily on the intricate embedding of a (countable) bi-ordered group into the group of orientation preserving homeomorphisms of the real line  build by Dovhyi and Muliarchyk in \cite{muliarchyk1}, together with a delicate perturbation argument to show that the bi-ordering obtained is in fact an ordering on the free product and not only on a quotient of it.

The aim of this note is to give another proof of Vinogradov's theorem. Our approach is also dynamical, yet it uses the usual  {\em dynamical realization} of an ordered group  as a group of homeomorphisms of the line (see, for instance, \cite{BMRT,clay-rolfsen,GOD,ghys,navas}, or \S \ref{sec dynamical realization}) and   (modifications of) ping-pong partitions to ensure faithfulness of the representation. Furthermore, with our construction we get that  if the bi-orders of the factors are invariant under some automorphisms, then the bi-ordering on the product turns out to be invariant under the product automorphisms, thus recovering a result from Rolfsen \cite{rolfsen}.

\begin{thm} \label{teo main} If $(A,\preceq_A)$ and $(B,\preceq_B)$ are  biordered groups, then  $A*B$ admits a biorder $\preceq^*$ that extends $\preceq_A$ and $\preceq_B$.  Moreover if $f:A\to A$ and $g:B\to B$ are automorphisms that preserves the bi-orderings $\preceq_A$ and $\preceq_B$, then $\preceq^*$  is invariant under the induced automorphism $f*g$.

\end{thm}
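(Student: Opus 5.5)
We build a faithful action of $A*B$ on a totally ordered set and read off the bi-order from a bi-invariance-friendly comparison. The order on $A*B$ is defined from an action of $A*B$ by order-preserving bijections of a totally ordered set $\Omega$, with every element of $\Omega$ in a \emph{prescribed position}. Following the dynamical realization of \S\ref{sec dynamical realization}, $A$ acts on itself by left translations preserving $\preceq_A$, and likewise $B$. Since these actions are free, left-invariant orders come out directly. Bi-invariance, however, requires more: we must show that the set of positive elements is a subsemigroup closed under conjugation.

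The plan is to use the conjugation action. Since $\preceq_A$ is bi-invariant, $A$ acts on $(A,\preceq_A)$ by conjugation $x\mapsto axa^{-1}$, and this action is order preserving. We look for a totally ordered set $\Omega$ carrying order-preserving actions of $A$ and $B$, together with a ``positivity functional''. The cleanest route is a Magnus/Vinogradov-type device: positive cones given by the sign of the leading term.

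Concretely, we take the free product as acting on a lexicographically ordered set of reduced words. Let $\Omega$ be the set of reduced alternating sequences $(x_1,\dots,x_n)$ with letters in $A\setminus\{1\}$ and $B\setminus\{1\}$. Order $\Omega$ by comparing the first letters using the factor orders, where a letter is also compared against the empty continuation: $x_1\succ 1$ means the word lies above the empty word. Ties are then broken recursively on the tail, after conjugating by the common prefix. We declare $w\in A*B$ positive if $w\cdot\ast\succ\ast$ at a suitably chosen \emph{generic} base point. Genericity is where modifications of ping-pong come in. We take $\Omega$ to be a larger set, built from cosets, on which $A$ and $B$ act so that the reduced words $a_1b_1\cdots$ move a base point along disjoint ping-pong regions, and the displacement is decided by the first letter. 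That displacement then has the sign of $\preceq_A$ or $\preceq_B$ on that letter.

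The key steps, in order, are these.
\begin{enumerate}
\item Construct the action of $A*B$ on an ordered set $\Omega$ extending the dynamical realizations of both factors. The action must be faithful: every nontrivial reduced word moves a fixed base point $\ast$. This is where ping-pong partitions are used.
\item Define $P=\{w: w(\ast)\succ\ast\}$ and check that $P\cup P^{-1}\cup\{1\}$ is all of $A*B$ and that $PP\subset P$. Both follow because the action is order preserving and faithful at $\ast$.
\item Show conjugation invariance, $gPg^{-1}=P$. The idea is to use a family of base points: every point of the orbit of $\ast$ must induce the \emph{same} order. Equivalently, the sign of the displacement $w(y)$ versus $y$ must be independent of $y$ in the orbit. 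We aim to arrange this by making $\Omega$ a ``self-similar'' lexicographic tree, in which the displacement of $w$ at any point is governed by the same leading-letter comparison as at $\ast$. Bi-invariance of $\preceq_A$ and $\preceq_B$ is used exactly here, since conjugating a leading letter preserves its sign.
\item Check that the order restricts to $\preceq_A$ and $\preceq_B$ on the factors.
\item For the automorphisms: $f*g$ induces a bijection of $\Omega$ built letterwise. This bijection is order preserving because $f$ and $g$ preserve the factor orders, and it fixes $\ast$ while intertwining the actions. Hence $(f*g)(P)=P$.
\end{enumerate}

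The main obstacle is step 3. A dynamical realization naturally gives only a left order, so we must design $\Omega$ so that the sign of a word's displacement is the same at every orbit point. The first attempt will be the lexicographic tree of reduced words with cyclically reduced comparison: conjugate $w$ to its cyclically reduced form and take the sign of its displacement at the fixed point of the associated ping-pong dynamics. We then verify that this sign is multiplicative on positives, using the ping-pong attracting and repelling regions.
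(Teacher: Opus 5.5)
Steps 1 and 2 only produce a left order; this is essentially Corollary \ref{coro left-orders} of the paper. The gap is step 3, which is in fact the whole theorem, and it is not proved.

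Requiring the sign of $w(y)$ versus $y$ to be the same for all $y$ in the orbit of $\ast$ means every element of $A*B$ acts monotonically. That is exactly the property of the dynamical realization of a bi-order on $A*B$ (Remark \ref{rem dynamical real for bi-orders}). So it is equivalent to the conclusion, and your outline gives no construction achieving it. The one mechanism you do propose, ``displacement decided by the leading letter'', provably fails. Take $a\succ_A \id$ and $b\succ_B \id$. The words $ab^{-1}$ and $b^{-1}a=b^{-1}(ab^{-1})b$ are conjugate, yet their leading letters have opposite signs. Concretely, in the paper's ping-pong action $\widetilde\rho$ with base point $\ast\in(7/10,4/5)\subset\mathcal{I}$, one checks $\widetilde\rho(ab^{-1})(\ast)\in(4/5,1)$, so $\widetilde\rho(ab^{-1})(\ast)>\ast$. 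But $\widetilde\rho(b^{-1}a)(\ast)\in(1/2,7/10)$, so $\widetilde\rho(b^{-1}a)(\ast)<\ast$. Your justification that ``conjugating a leading letter preserves its sign'' confuses conjugating $w$ with conjugating its first letter: the leading letter of $gwg^{-1}$ is a letter of $g$.

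The cyclic-reduction fallback does not repair this, for three reasons. First, cyclically reduced forms are only defined up to cyclic permutation; again compare $ab^{-1}$ and $b^{-1}a$. Second, such a word acting by ping-pong typically moves points both ways, and its displacement at a fixed point is zero, so there is no sign ``at the fixed point''. Third, a rule depending only on the conjugacy class is automatically normal, but then $PP\subset P$ becomes the real difficulty. The cyclic reduction of $uv$ bears no relation to those of $u$ and $v$, and appealing to attracting and repelling regions is not an argument.

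The missing idea is a positivity test that is conjugation invariant by design and can be shown to be total. The paper abandons base points entirely.

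\begin{enumerate}
\item It cuts $\widetilde\rho$ so that $\rho(A)$ is trivial on $(-\infty,0]$ and $\rho(B)$ is trivial on $(-\infty,1/2]$. Then every nontrivial $\rho(w)$ has a support with finite infimum $x_w$.
\item It declares $w$ positive if $\rho(w)(x)\ge x$ on a right neighbourhood of $x_w$.
\item This germ condition is preserved by conjugation by any increasing homeomorphism. Hence normality is free, and $PP\subset P$ follows by comparing $x_u$ with $x_v$.
\item Invariance under $f*g$ reduces to building a conjugacy that fixes $0$ and $1/2$ (Lemma \ref{lem functor}).
\end{enumerate}

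The real content is Proposition \ref{prop principal}: the germ at $x_w$ always has a definite sign. The proof uses the reduction lemmas (Lemmas \ref{lem reduction}, \ref{lema soporte}, \ref{lema germen}) and the absence of intervals of fixed points (Proposition \ref{prop ping pong}). Together these show that $x_w$ is $0$ or lies in the $\rho$-orbit of $1/2$. Near $x_w$, $\rho(w)$ then coincides with some $\rho(a)$ or with a conjugate of some $\rho(b)$, and that germ has a definite sign. Bi-invariance of the factor orders enters here, not through leading letters, but through the fact that a positive $a$ satisfies $x\le\rho(a)(x)$ for every $x$.

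Finally, dynamical realizations require countability. The general case therefore also needs a compactness step such as Theorem \ref{teo invariant Burns Hale}, which your outline does not address.
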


\begin{rem} As pointed out by Passman \cite{passman} Vinogradov's approach also shows that the free product of left-orderable groups is left-orderable. This is also the case for our approach as we point out in Corollary \ref{coro left-orders}.  In \cite{dicks-sunic} Dicks and Šunić  describe a very nice geometrical method to produce left-orderings on free product of groups out of actions on oriented trees.

\end{rem}




Our proof of Theorem \ref{teo main} is organized in three steps. We first show in \S \ref{sec countable} that the free product $A*B$ of two {\em countable} bi-ordered groups admits a bi-order that extends the bi-orders on the factors. For this we  modify a natural ping-pong action of $A*B$ on the line to produce an action on the real line by  homeomorphisms whose supports are bounded from below, and with the property that each homeomorphism has a well defined sign at the infimum of its support (see Proposition \ref{prop principal}). This behaviour is much like that of piecewise analytic  homeomorphisms with support bounded from below,  compare with  \cite{muliarchyk1,muliarchyk2}. Then, in \S \ref{sec invariance}, we exploit the fact that an order preserving automorphisms of a countable ordered group can always be implemented by conjugation of its corresponding dynamical realizations (see Proposition \ref{prop auto es conjugar}) to produce a conjugation of our special action of $A*B$ that implements our given automorphism $f*g$ from Theorem \ref{teo main}. This allows us to finish the proof in the countable case. Finally,  in \S \ref{sec uncountable}, we prove Theorem \ref{teo invariant Burns Hale} which is a variation of  Burns and Hale theorem \cite{burns-hale} that enable us to pass from the countable case of Theorem \ref{teo main} to the general one.

\begin{rem} The construction of our special action of $A*B$ on the line is much inspired in \cite[\S1.2.3]{GOD}, where an action of the free group $F_2$ on the line is constructed aiming  to show its bi-orderability. The main difference between both constructions, is that for the case of $F_2$ one can assume that the action is given by piecewise linear automorphism,  and hence the elements of $F_2$ certainly have a well defined sign around its infimum of support.  In our case, we cannot assume that the initial actions of $A$ and $B$ have any degree of regularity and so we have to elaborate a little bit in order to prove Proposition \ref{prop principal}.

\end{rem}

\paragraph{\bf Acknowledgements}  We are grateful to Joaqu\'in Brum for several discussions around orderability and for pointing out and explain  to us the work of Muliarchyk \cite{muliarchyk2}.  Both author acknowledge the support of Mathamsud {\em Interactions between groups and dynamics}.   C.R. also acknowledge the support of Fondecyt 1241135.

\section{Preliminars}
\label{preliminars}

Let us fix some basic notations. Throughout this work, if $X\subset\R$ we denote by $int(X)$ its interior and $cl(X)$ its closure. The identity of group will be denoted by $\id$.



\subsection{Bi-orders, partial bi-orders and positive cones} An order relation $\preceq$ (not necessarily total) on a group
$G$ is said to be a {\em partial bi-ordering} if for every
$g_1, g_2,g_3$ in $G$, we have that
$g_1\prec g_2$ implies $g_3g_1\prec
g_3g_2$ and $g_1g_3\prec
g_2g_3$.  An element $g \in G$ is called
$\preceq$-positive  if $\id\prec g$, where $\id$ denotes the identity of $G$. The subset of $\preceq$-positive
elements, usually called the {\em positive cone} for $\preceq$, will
be denoted by $P$. Clearly $P$ satisfies

\noindent $(o1)$ $P  P \subseteq P \;$ and $gP g^{-1}=P $ for every $g\in G$; that
is, $P$ is a normal sub semigroup,

\noindent $(o2)$ $P \cap P^{-1}=\emptyset$, where
$P^{-1}=\{g^{-1}\in G\mid g\in P\}=\{g\in
G\mid g\prec id\}$.

If in addition $\preceq$ is a total order, we will simply say that
$\preceq$ is a {\em bi-ordering}. In this case, the set of
$\preceq$-positive elements also satisfies

\noindent $(o3)$ $G=P \cup P^{-1} \cup \{\id\}$.

Conversely, given any subset $P\subseteq \Gamma$ satisfying the
conditions $(o1)$, $(o2)$ and $(o3)$ (resp. $(o1)$ and $(o2)$)
above, we can define a bi-ordering (resp. a partial bi-ordering)
$\preceq$ by letting $f\prec g$ if and only if $f^{-1}g\in P$.
We will usually identify $\preceq$ with $P$.

\subsection{Compactness of the space of partial bi-orders} \label{sec PBO}Given a group $G$ (of arbitrary cardinality), we denote the set
of all partial bi-orderings on $G$ by
$\mathcal{PBO}(G)$. This set has a natural topology first
exploited by Sikora for the case of (total orderings on) countable
groups \cite{sikora}. This topology can be defined by identifying
$P\in \mathcal{PBO}(G)$ with its characteristic function
$\chi_P \in \{0,1\}^G $. In this way, we can view
$\mathcal{PBO}(G)$ as embedded in $\{0,1\}^G$. This latter
space, with the product topology, is a Hausdorff, totally
disconnected, and compact space. It is not hard to see that (the
image of) $\mathcal{PBO}(G)$ is closed, and hence
compact as well (see \cite{navas,rivas free,GOD,clay-rolfsen} for details).
In the same way, for a bi-orderable group $G$, the space of
all bi-orderings, here denoted $\mathcal{BO}(G)$, is closed
in $\mathcal{PBO}(G)$, hence compact as well.

\subsection{Left-orderings and dynamical realization} \label{sec dynamical realization} A total ordering $\preceq$ on a group $G$ is called a left-orderings if $f\preceq g$ implies that $hf\preceq hg$ for all $f,g,h\in G$.  It is a classical fact that a countable group is left orderable if and only if it embeds into $\homeo_+(\R)$, the group of orientation preserving homeomorphisms of the real line (see  \cite{GOD,ghys,navas}). 

One direction of the  proof consists in building a {\em dynamical realization} of a countable  left-ordered  group.  To explain this let $(G,\preceq)$ be a countable and left-ordered group and  consider, first,  an order preserving injection $i:(G,\preceq)\to (\R,\leq)$, that is well behaved  in the sense that 
\begin{enumerate}
\item The image $i(G)$ is unbounded in both directions of the line,
\item Every connected component $I$ of the complement of $cl(i(G))$ satisfies that its boundary $\partial I\subset i(G)$.
\end{enumerate}
Concrete examples of well behaved  injections $i$ can be easily be  build algorithmically (see for instance \cite{ghys,navas}), but it requires $G$ to be countable (for instance $\omega_1$, the first uncountable ordinal, cannot be orderly embedded into $\R)$.  Other than  that any  bijection  $h:i(G)\to i(G)$ that preserves the order $\leq$  can be extended to an increasing homeomorphisms of the real line by first extending it to the closure of $i(G)$ and then taking linear interpolation on the complements. In fact, it is the whole group of order preserving bijections of $i(G)$ that extends to a group of homeomorphisms.  This is implicit in \cite{ghys,navas,GOD,clay-rolfsen} and is explicit in \cite[Lemma 2.2.14]{BMRT}.  In particular, since  $G$ induces an order preserving bijection of $i(G)$ via
$$g.(i(x))=i(g\cdot x),$$
we obtain an homomorphic embedding  $\rho:G\to \homeo_+(\R)$, that represents $G$ as a group of order preserving homeomorphisms of the line.  This is what we call a {\em dynamical realization} of $(G,\preceq)$.   We say that a point $x_0\in \R$ is a {\em reference point} of the dynamical realization if it satisfies that
$$id\prec g \Leftrightarrow x_0 < \rho(g) (x_0).$$
Reference points may not be unique but certainly $x_0=i(\id)$ is a reference point.

Analogously, any order-preserving automorphism $\varphi\in Aut(G,\preceq)$ defines an order preserving bijection of $i(G)$ via
$\varphi(i(x))=i(\varphi(x))$. Hence we obtain 
\begin{prop} \label{prop auto es conjugar}For every order-preserving automorfism of a countable left-ordered group  $\varphi\in Aut((G,\preceq))$, there is an increasing homeomorphism $h:\R\to\R$ such that 
$$h\circ \rho(g)\circ h^{-1}(x)=\rho(\varphi(g))(x)$$
for every $x\in \R$ and every $g\in G$.  

\end{prop}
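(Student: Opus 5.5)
The plan is to build $h$ from the bijection $\hat\varphi$ of $i(G)$ given by $\hat\varphi(i(x))=i(\varphi(x))$, and then check that conjugating by $h$ implements $\varphi$. Since $\varphi$ is an order-preserving bijection of $(G,\preceq)$, $\hat\varphi$ is an order-preserving bijection of $i(G)$ onto itself. By the fact recalled in \S\ref{sec dynamical realization}, every order-preserving bijection of $i(G)$ extends to an increasing homeomorphism of $\R$: first extend to $cl(i(G))$, then interpolate linearly on the complementary components. Let $h$ denote this extension of $\hat\varphi$.

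For the identity, first check it on $i(G)$. For $y\in G$ we have $h^{-1}(i(y))=i(\varphi^{-1}(y))$, so
$$h\circ\rho(g)\circ h^{-1}(i(y))=i\big(\varphi(g\,\varphi^{-1}(y))\big)=i\big(\varphi(g)\,y\big)=\rho(\varphi(g))(i(y)).$$
Thus the two homeomorphisms $h\rho(g)h^{-1}$ and $\rho(\varphi(g))$ agree on $i(G)$, and by continuity they agree on $cl(i(G))$.

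The main step is agreement on the gaps. Let $I$ be a connected component of $\R\setminus cl(i(G))$. By the well-behavedness conditions (1) and (2), $I$ is a bounded interval $(a,b)$ with $a,b\in i(G)$. Every map involved (the homeomorphisms $h$, $h^{-1}$, $\rho(g)$ and $\rho(\varphi(g))$) preserves $cl(i(G))$, so each sends gaps to gaps. On each gap, each of these maps is the affine interpolation between the images of the endpoints. A composition of affine maps is affine, so $h\rho(g)h^{-1}$ restricted to $I$ is the affine map $I\to J$ determined by the images of $a$ and $b$. These images coincide with those under $\rho(\varphi(g))$ by the previous step, and an affine map on an interval is determined by its values at the endpoints. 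Hence the two maps agree on $I$, and so everywhere.

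The only delicate point is that the extension must be canonical, namely linear interpolation with endpoints in $i(G)$, so that conjugation commutes with the extension procedure. This is exactly where condition (2) is needed. If an extension lemma such as \cite[Lemma 2.2.14]{BMRT} instead provides a group homomorphism from the group of order-preserving bijections of $i(G)$ into $\homeo_+(\R)$, the result is immediate from functoriality. In that case $\hat\varphi\circ\hat g\circ\hat\varphi^{-1}=\widehat{\varphi(g)}$ as bijections of $i(G)$, and applying the homomorphism yields the claim.
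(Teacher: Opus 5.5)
Your proof is correct and is essentially the paper's argument. The paper takes $h$ to be the homeomorphism extending the order-preserving bijection $i(x)\mapsto i(\varphi(x))$ of $i(G)$, and gets the conjugacy from the fact that the extension procedure (closure, then linear interpolation on gaps) is a homomorphism into $\homeo_+(\R)$, as in \cite[Lemma 2.2.14]{BMRT}. Your checks on $i(G)$, on $cl(i(G))$ by continuity, and on the gaps via affine maps with endpoints in $i(G)$ simply prove that homomorphism property explicitly for the composition $\hat\varphi\circ\hat g\circ\hat\varphi^{-1}$.
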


\begin{rem} \label{rem free dynamical real}  In a dynamical realization of $\rho:G\to \homeo_+(\R)$, the set of fixed points of a non-trivial $g\in G$ has empty interior. Indeed    any point in the image of $i(G)$ has free orbit under the action of $\rho(G)$, and, although  fixed points may appear when passing to the action in its closure\footnote{In fact, an important result of H\"older \cite{holder} states that a group acting freely by homeomorphisms of the line must be abelian. See, for instance,  \cite{GOD} for a modern proof.},  outside this closure the $\rho$-action is also free. To see this note that for $I$ a (open) complementary interval of $\R\setminus cl(i(G))$, we have that $\partial I\subseteq i(G)$ and hence $I$ is a {\em wandering} interval for $\rho(G)$, meaning that $\rho(g)(I)\cap I=\emptyset $  for any $g\neq \id$.

\end{rem}


\begin{rem}\label{rem dynamical real for bi-orders} In the special case where  $\preceq$ is a bi-ordering of $G$, then  $\id\prec g$ implies that $x\prec g\cdot x $ for every $x\in G$, and hence every $\preceq$-positive element $g\in G$ satisfies that
$$x\leq \rho(g)(x)$$
for every $x\in \R$. See for instance \cite{navas}. We will use this last fact in the construction of our bi-order of $A*B$.
\end{rem}

\subsection{Free product and ping pong configurations} \label{sec ping pong}  A ping pong configuration is a simple dynamical mechanism that produces a faithful action of a free product of groups. Early applications of this mechanism can be traced back to Klein and Tits.  See \cite[\S II]{harpe} and the references therein. For our purpose we use the following formalization.

Let $\phi:G\to Sym(X)$ be an action of a group $G$ by bijections of a set $X$, and let $A$ and $B$ be subgroups of $G$. We say that a  {\em  ping pong configuration}  for the action of $\phi(A)$ and $\phi(B)$ is a collection of disjoint subsets $X_A$ and $X_B$ of $X$ such that  $X\setminus (X_A\cup X_B) \neq\emptyset$ and 
$$\phi(a)(X\setminus X_A)\subseteq X_A \text{ and } \phi(b)(X\setminus X_B)\subseteq X_B$$
for every $a\in A\setminus \{\id\}$ and every $b\in B\setminus \{\id\}$. 

\begin{lem}  \label{lem ping pong} With the notations above, if $X_A$ and $X_B$ is a ping pong configuration for the action of $\phi(A)$ and $\phi(B)$, then $\langle \phi(A),\phi(B)\rangle$, the group generated by $\phi(A)$ and $\phi(B)$, is isomorphic to the free product $A*B$. Moreover, any point in $X\setminus (X_A\cup X_B) \neq\emptyset$ has free orbit under $\langle \phi(A),\phi(B)\rangle$.

\end{lem}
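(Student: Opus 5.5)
The plan is the classical ping pong argument. Any nontrivial element $w$ of $A*B$ that does not lie in a factor can be written as a reduced word $w=g_n\cdots g_1$ with $n\geq 1$. Each $g_i$ lies in $(A\setminus\{\id\})\cup(B\setminus\{\id\})$, and consecutive letters lie in different factors. Let $\Phi:A*B\to \langle\phi(A),\phi(B)\rangle$ be the homomorphism given by the universal property of the free product, induced by the restrictions of $\phi$ to $A$ and $B$. It is surjective, so both claims follow if we show: for every $x\in X\setminus(X_A\cup X_B)$ and every nontrivial reduced word $w$, we have $\Phi(w)(x)\neq x$. This gives injectivity of $\Phi$, hence the isomorphism, and also shows that $x$ has free orbit.

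The key step is an induction on $n$ proving that $\Phi(g_k\cdots g_1)(x)$ lies in $X_A$ if $g_k\in A$, and in $X_B$ if $g_k\in B$. For $k=1$, say $g_1\in A\setminus\{\id\}$: since $x\notin X_A$, the ping pong hypothesis gives $\phi(g_1)(x)\in X_A$. For the inductive step, suppose $y=\Phi(g_{k}\cdots g_1)(x)\in X_A$ and $g_{k+1}\in B\setminus\{\id\}$. The sets $X_A$ and $X_B$ are disjoint, so $y\in X\setminus X_B$, and hence $\phi(g_{k+1})(y)\in X_B$. The case with $A$ and $B$ swapped is symmetric. It follows that $\Phi(w)(x)\in X_A\cup X_B$, while $x\notin X_A\cup X_B$, so $\Phi(w)(x)\neq x$.

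It remains to handle nontrivial elements lying in a single factor, for instance $w=a\in A\setminus\{\id\}$. Here $\phi(a)(x)\in X_A$ again forces $\phi(a)(x)\neq x$. This is already covered by the case $n=1$ above. Note also that $\phi|_A$ and $\phi|_B$ are automatically injective, so the factors embed.

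There is no real obstacle. The only point needing care is that the point $x$ must be taken outside both $X_A$ and $X_B$: this is exactly why the configuration requires $X\setminus(X_A\cup X_B)\neq\emptyset$. It avoids the usual need to conjugate $w$ so that it begins and ends in the same factor.
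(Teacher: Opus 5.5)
Your proof is correct and follows the paper's own argument. The paper likewise observes that every nontrivial $w\in A*B$ sends each point of $X\setminus(X_A\cup X_B)$ into $X_A\cup X_B$, which gives both injectivity of the natural surjection $A*B\to\langle\phi(A),\phi(B)\rangle$ and freeness of that orbit; you simply write out the induction on word length that the paper leaves implicit.
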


\proof Clearly the group $\langle \phi(A),\phi(B)\rangle$ is an homomorphic image of the free product $A*B$.  Furthermore, the image of any point $x\in X\setminus (X_A\cup X_B)$  under $\phi(w)$  belongs to $X_A\cup X_B$ for any non-trivial  $w\in A*B$. Hence, the natural homomorphisms $\phi:A*B\to \langle\phi(A),\phi(B)\rangle$ is an isomorphism, and the points in $X\setminus (X_A\cup X_B)$ have free orbit.  $\hfill\Box$

\section{Proof in the countable case}\label{sec countable}

In this section we show the countable version of our main result.

\begin{thm} \label{teo main 2} Let $(A,\preceq_A)$ and $(B,\preceq_B)$ be  countable biordered groups.  Then
\begin{enumerate}
\item[$i)$]   The free product $A*B$ admits a biorder $\preceq^*$ that extends $\preceq_A$ and $\preceq_B$.
\item[$ii)$] If $f:A\to A$ and $g:B\to B$ are automorphisms that preserves the bi-orderings  $\preceq_A$ and $\preceq_B$, then $\preceq^*$  is invariant under the induced automorphism $f*g$.

\end{enumerate}
  
\end{thm}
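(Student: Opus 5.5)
We will realize $A*B$ faithfully inside $\homeo_+(\R)$ so that every non-trivial element has support bounded from below and a well defined sign just to the right of the infimum of its support. We then declare $w$ positive when it moves points up immediately to the right of $\inf\mathrm{supp}(w)$. Take dynamical realizations $\rho_A$ and $\rho_B$ of $(A,\preceq_A)$ and $(B,\preceq_B)$. By Remark~\ref{rem dynamical real for bi-orders}, each $\preceq$-positive element satisfies $x\le\rho(g)(x)$ everywhere. By Remark~\ref{rem free dynamical real}, the fixed set of a non-trivial element has empty interior.

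\emph{Construction.} Fix a discrete sequence of disjoint intervals $I_n$, $n\ge 0$, accumulating only at $+\infty$. In each $I_n$ place a copy of a ping pong configuration for $A$ and $B$. Concretely, conjugate $\rho_A$ into a subinterval $J_A\subset I_n$ and $\rho_B$ into $J_B\subset I_n$, with the two supports overlapping as in the natural ping pong picture of \cite[\S1.2.3]{GOD}. Choose the overlap so that Lemma~\ref{lem ping pong} applies on some subinterval and the action on $\bigcup_n I_n$ is faithful. Extend $A$ and $B$ by the identity outside $\bigcup I_n$. The actions on different $I_n$ should be chosen ``increasingly generic'', for instance by using different gluing homeomorphisms. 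The aim is that for every $w\ne\id$ the set $\mathrm{supp}(w)$ is bounded below, its infimum lies in the first $I_n$ where $w$ acts non-trivially, and $w$ is either $\ge \id$ or $\le\id$ on a right neighbourhood of that infimum. For generators this holds by the bi-order remark: a positive $a$ satisfies $\rho_A(a)\ge\id$ everywhere, so its sign is globally defined. This gives the extension property, since the sign of $a$ at the infimum of its support equals its $\preceq_A$-sign.

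\emph{Order.} Let $P=\{w\ne\id : w(x)>x \text{ for } x \text{ just right of } \inf\mathrm{supp}(w)\}$. We check that $P$ is a bi-ordering:
\begin{itemize}
\item It is closed under products: compare infima of supports, and if they agree, both elements push up there.
\item It is conjugation invariant: $\inf\mathrm{supp}(hwh^{-1})=h(\inf\mathrm{supp}(w))$, and the germ sign is preserved since $h$ is increasing.
\item Totality $(o3)$ is exactly the well-defined-sign property.
\end{itemize}

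\emph{Main obstacle.} The crux is proving the well-defined-sign property, which is Proposition~\ref{prop principal}. A general element $w=a_1b_1\cdots$ need not have a sign near the infimum of its support, because of possible oscillation: fixed points can accumulate at the infimum from the right. My first attempt would be a ping pong argument localized at the left end of each $I_n$. Near the infimum of a block only a controlled part of the word acts. Using the ping pong sets, I would show that the reduced word maps a small right neighbourhood of the infimum into $X_A$ or $X_B$ according to its first letter. Monotonicity of positive generators (Remark~\ref{rem dynamical real for bi-orders}) then forces the displacement there to have the sign of an explicitly computable letter, so no oscillation occurs. If this fails, I would modify the partition so that near each infimum the elements act by translations of a wandering interval, using the freeness in Remark~\ref{rem free dynamical real}.

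\emph{Part ii).} By Proposition~\ref{prop auto es conjugar}, there are homeomorphisms $h_A$ and $h_B$ conjugating $\rho_A$ to $\rho_A\circ f$ and $\rho_B$ to $\rho_B\circ g$. I would build the construction equivariantly, so that the gluing data in each $I_n$ is compatible, and combine $h_A$ and $h_B$ into a single increasing homeomorphism $H$ with $H\rho(w)H^{-1}=\rho((f*g)(w))$. For this, the model of the configuration should depend only on the dynamical realizations. Since $H$ is increasing, it preserves infima of supports and germ signs, so $P$ is $(f*g)$-invariant.
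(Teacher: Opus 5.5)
There is a genuine gap, and it sits where you locate the crux. The construction is never pinned down, so neither faithfulness nor the well-defined sign (Proposition~\ref{prop principal}) is actually established.

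\emph{Faithfulness.} A block $I_n$ with one copy of $\rho_A$ on $J_A$ and one copy of $\rho_B$ on an overlapping $J_B$ is not a ping-pong configuration. Lemma~\ref{lem ping pong} needs an invariant set $X$ on which every $a\neq\id$ sends $X\setminus X_A$ into $X_A$. A subinterval is not invariant, and $A$ fixes $I_n\setminus J_A$ pointwise, so there is nothing for the lemma to apply to. In fact such a block is in general not faithful: for $A=B=\Z$, large powers of two overlapping one-bump homeomorphisms generate Thompson's group $F$, which is not free. So ``increasingly generic'' gluings would need a real argument.

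\emph{The sign property.} More seriously, nothing in your set-up controls the germ of $w=b_n^{a_n}\cdots b_1^{a_1}$ at $\inf\mathrm{supp}(w)$. Your localized ping-pong is a plan with a fallback, not a proof. The paper supplies the missing ideas as follows.
\begin{enumerate}
\item It blows up the reference point, so that $[1/5,4/5]$ becomes a wandering interval made of reference points.
\item It uses a single periodic configuration rather than disjoint blocks: $A$ is repeated on each $[n,n+1]$ and $B$ on each $[n,n+1]+1/2$. Then $\mathcal C_A\subset\mathcal I_B$ and $\mathcal C_B\subset\mathcal I_A$ form an honest ping-pong on all of $\R$. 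This gives faithfulness, fixed sets with empty interior, and that the stabilizer of $1/2$ under $\widetilde\rho$ is exactly $B$ (Proposition~\ref{prop ping pong}).
\item It cuts $A$ at $0$ and $B$ at $1/2$. Faithfulness of the cut action $\rho$ holds because $\rho$ agrees with $\widetilde\rho$ far to the right.
\item It introduces the reduction $red(w,x)$ of Lemma~\ref{lem reduction}. Near $x^+$, $\rho(w)$ agrees with $\widetilde\rho(red(w,x))$, and the reduction can only change when the trajectory of $x$ hits the orbit of $1/2$. Hence $x_w\in\{0\}\cup\rho(A*B)(1/2)$.
\item When $x_w=1/2$, the word $red(w,1/2)$ fixes $1/2$ under $\widetilde\rho$, so it is a nontrivial $b\in B$ (Lemmas~\ref{lema soporte} and \ref{lema germen}). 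The sign then comes from bi-invariance of $\preceq_B$.
\end{enumerate}

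Part ii) has the same problem: ``combine $h_A$ and $h_B$ into a single $H$'' is the entire content of Lemma~\ref{lem functor}. On $J_A\cap J_B$ the conjugacies $h_A$ and $h_B$ have no reason to agree. Block-dependent generic gluings would also all have to be intertwined by one $H$, which works against the genericity you want for faithfulness. The paper resolves this in three steps:
\begin{enumerate}
\item Via Remark~\ref{rem automorfismo blow}, it chooses $h_A$ to be the identity on the blown-up interval and affine between its images. Then the periodic extensions $\widetilde h_A$ and $\widetilde h_B$ commute, and $h_0=\widetilde h_B\circ\widetilde h_A$ fixes $\mathcal I$.
\item It corrects $h_0$ recursively on the nested sets $\mathcal C_k$ coming from the ping-pong tree. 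Uniform convergence follows from the slope bound $1/3$.
\item The limit fixes $0$ and $1/2$, so it also conjugates the cut action $\rho$.
\end{enumerate}

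Your order-theoretic checks are fine, with one adjustment. Define the cone by $\rho(w)(x)\ge x$ on a right neighbourhood of $x_w$, with $\rho(w)$ nontrivial there, rather than by strict inequality. Positive elements can have fixed points accumulating at $x_w$ from the right, for example $(0,1)$ in $\Z^2$ with the lexicographic order.
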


Our proof consists in building a special action of the free product $A*B$ on the real line. Its construction requires three steps. 

Let $\preceq_A$ and $\preceq_B$ be biorders of $A$ and $B$ respectively.  

\vsc
\noindent \underbar {Step 1:} {\em Preparing the factors: a blow-up on the dynamical realizations}. 

\vsc

For convenience  we will  consider dynamical realizations of $(A,\preceq_A)$ and $(B,\preceq_B)$ to be inside $\homeo_+([0,1])$ instead of $\homeo_+(\R)$.  This can be done since the interval $(0,1)$ is orderly-homeomorphic to $\R$ and  every increasing homeomorphism of $\R$ fixes $+\infty$ and $-\infty$.

So let  $\rho_A:A\to Homeo_+([0,1])$ be a dynamical realization of $(A,\preceq)$ and assume that $x_A=1/2$ is a reference point.  We now make a {\bf blow-up} of the point $x_A$ to create a {\em new} interval on which $A$ still acts. For this we  first replace each point $o$ in the $A$-orbit of $x_A$  by a bounded  and closed  interval $I_o$. Since the orbit  is countable we can choose the length of the intervals $I_o$ to ensure that  their reunion has finite Lebesgue measure,  so that we end up with a {\em new} compact interval $J$.  Denote $J'$ the set of points of $J$ that are complementary to the reunion of $\{I_o\}$.  Note that on  $J'$  the group $A$ acts via $\rho_A$ and this action preserves the order of $J'$ coming from the line.    In particular (following \S\ref{sec dynamical realization})  this $A$ action on $J'$ can be extended to an action by homeomorphisms of the whole interval $J$. 

We call $\rho_A^{blow}:A\to \homeo_+(J)$  the resulting representation of $A$. After conjugation we can (and will) assume that $\rho_A^{blow}$  takes $A$ into $Homeo_+([0,1])$ and that the interval $I_{x_A}=[1/5,4/5]$. Remark that for 
$$\rho_A^{blow}:A\to \homeo_+([0,1])$$ the interior of the interval $I_{x_A}$ is {\em wandering}, meaning  that  $int(I _{x_A})$ is moved disjointly by any non-trivial element $a\in A$. Furthermore, any point $x\in int(I_{x_A})$ is a reference point for $\preceq$ in the sense that $\id \prec a$ if and only if $x< \rho_A^{blow}(a)(x)$.

For $(B,\preceq_B)$ we do the analogous construction to end up with $\rho_B^{blow}:B\to Homeo_+([0,1])$ and $int(I_{x_B})=(1/5,4/5)$ a wandering interval  made of reference points of $\preceq_B$.



\begin{rem} \label{rem extra wandering} In the representation $\rho_A^{blow}:A\to \homeo_+([0,1])$, the set of fixed points of a non-trivial $a\in A$ has empty interior. Indeed, from Remark \ref{rem free dynamical real} we know this to be true for the original dynamical realization $\rho_A$, and the procedure of blowing up only introduces wandering intervals, namely the intervals in the orbit of $int(I_{x_A})$.  The same holds for the representation $\rho_B^{blow}:B\to \homeo_+([0,1])$.


\end{rem}

\begin{rem} \label{rem automorfismo blow} Proposition \ref{prop auto es conjugar} also holds when the action is a blow-up of a dynamical realization. In fact we can define $h_A\in \homeo_+([0,1])$ so that $$h_A\circ \rho_A^{blow}(a)\circ h_A^{-1}(x)=\rho_A^{blow}(\varphi(a))(x)$$
and $h_A$ maps $\rho_A^{blow}(a)I_{x_A}\to \rho_A^{blow}(\varphi(a))I_{x_A}$ by an affine map for every $a\in A$. In particular $h_A$ is the identity on $I_{x_A}$. The same can be done for $B$.
\end{rem}



\vsc
\noindent \underbar {Step 2:} {\em Building a ping-pong.} 

\vsc

For $r$ a real number let $T_r:x\mapsto x+r$ be the traslation by $r$. We take $\widetilde\rho_A:A\to Homeo_+(\R)$ so that the restriction of $\widetilde\rho_A$ to $[0,1]$ equals $\rho_A^{blow}$ and, for each $n\in \Z$, the restriction of $\widetilde \rho_A$ to $[n,n+1]$ equals $T_n \circ \rho_A^{blow}\circ T_n^{-1}$. In other words $\widetilde\rho_A$ is the periodic repetition of $\rho_A^{blow}$. For $\widetilde\rho_B$ we do similarly, namely $\widetilde\rho_B:B\to Homeo_+(\R)$ is build such that on the interval $[n, n+1]+1/2$ the representation $\widetilde\rho_B$ equals $T_{n+1/2} \circ \rho_B^{blow} \circ T_{n+1/2}^{-1}$. We call $$\widetilde  \rho:A*B\to Homeo_+(\R)$$ the induced action of $A*B$ from $\widetilde\rho_A$ and $\widetilde\rho_B$.   In  Proposition \ref{prop ping pong} below, we show that $\widetilde\rho$ is  a faithful representation by exhibiting a {\em ping pong} configuration for the action. For its proof (and statement) we introduce some notations. These notations will be also needed in the  proof of Lemma \ref{lem functor}. 

For $j\in\Z$ let $I_A^j=(1/5,4/5)+j$ and $I_B^j=(1/5,4/5)+j+1/2$. These are  wandering intervals under $\widetilde{\rho}(A)$ and $\widetilde{\rho}(B)$ respectively. Put 
$$\mathcal{I}_A=\bigcup_{j\in\Z}I_A^j \;, \quad  \mathcal{I}_B=\bigcup_{j\in\Z}I_B^j \quad \text{and} \quad \mathcal{I}=\mathcal{I}_A\cap\mathcal{I}_B.$$ 
Remark that $\mathcal I\neq \emptyset$.   Also let $C_A^j=[-1/5,1/5]+j$, which is the connected component of $\R\setminus \mathcal{I}_A$ lying between $I_A^{j-1}$ and $I_A^j$, and   $C_B^j=[-1/5,1/5]+j+1/2$. Define $$\mathcal{C}_A=\bigcup_{j\in\Z}C_A^j \quad \text{and} \quad  \mathcal{C}_B=\bigcup_{j\in\Z}C_B^j \, .$$
By construction we get that $\mathcal C_A=\R\setminus \mathcal{I}_A$ and $\mathcal C_B=\R\setminus \mathcal{I}_B$. More precisely $C_A^j\subset I_B^{j-1}$ and $C_B^j\subset I_A^j$ for each $j\in \Z$. 

Let $A^+$ and $A^-$ be the positive and negative cones for the order $\preceq_A$ and define $B^+$ and $B^-$ analogously. By construction of the action \begin{equation} \label{eq union}
X^j_A:=\bigcup_{a\in A^+} \widetilde{\rho}(a)I_A^{j-1}\cup \bigcup_{a\in A^-} \widetilde{\rho}(a)I_A^{j} \subset C_A^j  
\end{equation} where the left side of the union lies in the open left half of $C^j_A$, i.e. $(j-1/5,j)$, and vice-versa. Define \begin{equation} \label{eq union2} U^j_A:=C^j_A\setminus cl(X^j_A)\;,\quad\widehat X_A^j=cl(X_A^j)\setminus X_A^j\;, \quad X_A=\bigcup_{j\in\Z}X_A^j \;,\quad \mathcal{U}_A=\bigcup_{j\in\Z}U_A^j \end{equation}
and notice that $\mathcal{U}_A$ is a union of intervals that are wandering under $\widetilde{\rho}(A)$, which are translates of the intervals mentioned in Remark \ref{rem extra wandering}. Of course, the same definitions and remarks are made for $B$. 


\vsc

\begin{prop} \label{prop ping pong}The action $\widetilde\rho:A*B \to Homeo_+(\R)$ is faithful and any point inside $\mathcal I$ has free orbit under $A*B$. Moreover the set of fixed points of any non-trivial $w\in A*B$ has empty interior, and  if  $\widetilde\rho(w)(1/2)=1/2$, then $w\in B$.
\end{prop}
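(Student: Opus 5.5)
The plan is to apply Lemma \ref{lem ping pong} with the sets $X_A:=\mathcal C_A$ and $X_B:=\mathcal C_B$, and then to reuse the same ping-pong bookkeeping for the remaining claims.

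\textbf{Ping-pong and faithfulness.} Since $C_A^j=[j-1/5,j+1/5]$ and $C_B^j=[j+3/10,j+7/10]$, the sets $\mathcal C_A$ and $\mathcal C_B$ are disjoint. Their common complement is $\mathcal I=\mathcal I_A\cap\mathcal I_B$, which is nonempty.

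Now take $a\in A\setminus\{\id\}$. We have $\R\setminus\mathcal C_A=\mathcal I_A$. Each $I_A^j$ is a translate of $int(I_{x_A})$, which is wandering for $\rho_A^{blow}$. Hence $\widetilde\rho(a)$ sends $I_A^j$ into $X_A^j$ or $X_A^{j+1}$, according to the sign of $a$, by \eqref{eq union}. Both of these lie in $\mathcal C_A$. The same argument works for $B$.

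Lemma \ref{lem ping pong} then gives that $\widetilde\rho$ is faithful on $A*B$ and that every point of $\mathcal I$ has free orbit. For later use I record the standard consequence, using $\mathcal C_A\subset\mathcal I_B$ and $\mathcal C_B\subset\mathcal I_A$. For a reduced word $w$ whose last letter lies in $A$ (resp. $B$) and first letter lies in $A$ (resp. $B$), we get $\widetilde\rho(w)(\mathcal I_A)\subset \mathcal C_A$ (resp. $\widetilde\rho(w)(\mathcal I_B)\subset\mathcal C_B$). If instead the first and last letters lie in different factors, the image lands in the $\mathcal C$ of the first letter's factor.

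\textbf{Fixed points have empty interior.} Let $w\neq\id$ and write $w=uvu^{-1}$ with $v$ cyclically reduced. Since $Fix(w)=\widetilde\rho(u)(Fix(v))$, it suffices to treat $v$.

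If $v$ lies in a factor, say $v\in A$, then on each $[n,n+1]$ it acts as a conjugate of $\rho_A^{blow}(v)$. Remark \ref{rem extra wandering} then shows that $Fix(v)$ has empty interior.

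Otherwise $v$ has even length at least $2$, and we may assume it starts with a letter of $A$ and ends with a letter of $B$. The ping-pong bookkeeping gives $\widetilde\rho(v)(\mathcal I_B)\subset\mathcal C_A\subset\mathcal I_B$. This image is disjoint from $\mathcal C_B$, but that is not yet enough, so I argue more directly. An open set $U$ fixed pointwise by $v$ satisfies: $U\cap\mathcal I_B$ is mapped into $\mathcal C_A$, so $U\cap\mathcal I_B\subset\mathcal C_A$. Applying the same reasoning to $v^{-1}$, which starts in $B$ and ends in $A$, gives $\widetilde\rho(v^{-1})(\mathcal I_A)\subset\mathcal C_B$, hence $U\cap\mathcal I_A\subset \mathcal C_B$. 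Combining these, $U\subset(\mathcal C_A\cup\mathcal C_B)$, with $U\cap\mathcal C_B\subset\mathcal C_A$ and $U\cap \mathcal C_A\subset\mathcal C_B$. Since $\mathcal C_A\cap\mathcal C_B=\emptyset$, this forces $U=\emptyset$. The only care needed here is checking the direction of inclusions in the ping-pong bookkeeping; I expect this to be the main point to verify.

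\textbf{The point $1/2$.} By construction $1/2$ is an endpoint of a block $[n,n+1]+1/2$, so it is fixed by all of $\widetilde\rho(B)$. Moreover $1/2\in I_A^0\setminus\mathcal C_A$.

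Suppose $w\notin B$ and write $w=b_0w'b_1$, where $w'$ is reduced and begins and ends with nontrivial letters of $A$. Then $\widetilde\rho(w)(1/2)=1/2$ is equivalent to $\widetilde\rho(w')(1/2)=\widetilde\rho(b_0^{-1})(1/2)=1/2$. But by the bookkeeping, $\widetilde\rho(w')(\mathcal I_A)\subset\mathcal C_A$, and $1/2\notin\mathcal C_A$. This is a contradiction, so $w\in B$.
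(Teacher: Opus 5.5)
Your ping-pong argument for faithfulness and free orbits in $\mathcal I$, the reduction to a cyclically reduced $v$, the one-factor case, and your argument that the stabilizer of $1/2$ is $B$ are all correct. The gap is in the mixed case of the fixed-point claim, exactly at the inclusion you flagged.

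Suppose $U$ is fixed pointwise by $v$. From $\widetilde\rho(v)(\mathcal I_B)\subset\mathcal C_A$ you get $U\cap\mathcal I_B\subset\mathcal C_A$. Since $\mathcal I_A=\R\setminus\mathcal C_A$ and $\mathcal I_B=\R\setminus\mathcal C_B$, this is equivalent to $U\cap\mathcal I=\emptyset$. The statement for $v^{-1}$ gives exactly the same condition.

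The further inclusions $U\cap\mathcal C_B\subset\mathcal C_A$ and $U\cap\mathcal C_A\subset\mathcal C_B$ do not follow. Points of $\mathcal C_B$ lie outside $\mathcal I_B$, so the first inclusion says nothing about them, and similarly for the second. The true relations $\mathcal C_A\subset\mathcal I_B$ and $\mathcal C_B\subset\mathcal I_A$ only give the vacuous $U\cap\mathcal C_A\subset\mathcal C_A$. So you have only re-proved that points of $\mathcal I$ have free orbits. An interval of fixed points inside a single $C_A^j$ or $C_B^j$ is not excluded.

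Fixed points really do occur there. Take $v=ab$ with $a\prec_A\id$ and $\id\prec_B b$. Then $\widetilde\rho(b)$ sends $I_B^{j-1}=(j-3/10,j+3/10)$ into $(j+3/10,j+1/2)\subset I_A^j$. Next, $\widetilde\rho(a)$ sends this into $(j,j+1/5)$. Hence $\widetilde\rho(v)$ maps $cl(I_B^{j-1})$ into $[j,j+1/5]\subset cl(I_B^{j-1})$, so it has a fixed point in $C_A^j$. A single level of ping-pong cannot decide whether such a fixed set has interior.

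To close the gap you must go into the nested structure inside $\mathcal C_A$ and $\mathcal C_B$, as the paper does. First it shows that points of $\mathcal U_A\cup\mathcal U_B$, the wandering gaps of the factor actions inside $\mathcal C_A$ and $\mathcal C_B$, also have free orbits:
\begin{itemize}
\item a nontrivial $A$-letter moves such a point freely within $\mathcal U_A$;
\item a nontrivial $B$-letter sends it into $X_B$;
\item after that, $\widetilde\rho(b)X_A\subset X_B$ and $\widetilde\rho(a)X_B\subset X_A$ keep it away from $\mathcal U_A$.
\end{itemize}
Second, it shows that the orbit of $\mathcal I\cup\mathcal U_A\cup\mathcal U_B$ is dense. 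This uses the decomposition \eqref{eq descomposicion} recursively, with Baire's theorem handling the nowhere dense leftover built from $\widehat X_A\cup\widehat X_B$. A dense set of points with free orbits gives directly that $\mathrm{Fix}(w)$ has empty interior for every $w\neq\id$. No reduction to cyclically reduced words is needed.
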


\proof 

Since $\mathcal I_A=\R\setminus \mathcal C_A$ (respectively $\mathcal I_B=\R\setminus \mathcal C_B$) is a reunion of wandering intervals  separated by global fixed points of $\widetilde\rho(A)$ (respectively $\widetilde\rho(B)$), we get that 
\[ \widetilde\rho(a)(\mathcal \R \setminus \mathcal C_A)\subset \mathcal C_A \quad \text{ and } \quad\widetilde\rho(b)(\mathcal \R\setminus \mathcal C_B)\subset \mathcal C_B \]
for every $a\in A\setminus\{\id\}$ and every $b\in B\setminus \{\id\}$. Since the sets $\mathcal C_A$ and $\mathcal C_B$ are disjoint and $\mathcal{I}=\mathcal{I}_A\cap\mathcal{I}_B=\R \setminus (\mathcal C_A \cup \mathcal C_B)$ is non empty,  we get that $\mathcal C_A$, $\mathcal C_B$ is a poing pong configuration of $\widetilde\rho(A)$ and $\widetilde\rho(B)$.  By Lemma \ref{lem ping pong}, we obtain that $\widetilde\rho:A*B\to \homeo_+(\R)$ is faithful and every point in $\mathcal I$ has a free orbit under $\widetilde\rho(A*B)$.

We now show that the stabilizer of $1/2$ is the subgroup $B$. Certainly we have that  $\widetilde\rho(b)(1/2)=1/2$ for every $b\in B$.  Conversely, let  $w=a_0b_0\cdots a_nb_n\in A\ast B$ be a word where all letters are non trivial except -perthaps- the first and last one. If $n>0$ we see that $\widetilde\rho(a_nb_n)(1/2)\in \mathcal{C}_A\subset \mathcal{I}_B $. On the other hand $$\widetilde\rho(a_jb_j)\mathcal{I}_B\subset \widetilde\rho(a_j)\mathcal{C}_B\subset \widetilde\rho(a_j)\mathcal{I}_A\subset \mathcal{C}_A\subset \mathcal{I}_B $$ for every $j=1,\ldots,n-1$. As we have seen, $\widetilde{\rho}(b_0)\mathcal{I}_B$ does not contain $1/2$, so $\widetilde\rho(w)(1/2)\neq 1/2$ for $n>1$ or $n=0$ and $a_0\neq \id$, i.e. when $w\notin B$.

Finally, we check  that the set of fixed points of a non-trivial $w\in A*B$ has empty interior. For this we set $\mathcal{U}=\mathcal{U}_A\cap\mathcal{U}_B$,  prove that that the points in $\mathcal{I}\cup\mathcal{U}$ have free orbits, and show that the orbit of $\mathcal{I}\cup\mathcal{U}$ is dense. We have already established that points in $\mathcal{I}$ have free orbits. For $x\in \mathcal{U}_A$, Remark \ref{rem extra wandering} says it has a free orbit under $\widetilde\rho(A)$ contained in $\mathcal{U}_A$. On the other hand, formula (\ref{eq union}) shows that $$\widetilde\rho(b)\mathcal{U}_A\subset \widetilde\rho(b)\mathcal{I}_B \subset X_B $$ if $b\in B \setminus\{\id\}$, which is disjoint from $\mathcal{U}_A$ (since $X_B\subset \mathcal{C}_B \subset \mathcal{I}_A $). The same argument can also show that $\widetilde\rho(b)(X_A)\subset X_B$ and $\widetilde\rho(a)(X_B)\subset X_A$ for non-trivial $a\in A$, $b\in B$ (a weaker version of ping-pong configuration on $X=X_A\cup X_B$ than Lemma \ref{lem ping pong}). This implies that for $w\in A\ast B$ with $|w|\geq 2$, we have $\widetilde\rho(w)(x)\in X$ which is disjoint from $\mathcal{U}_A$. The same applies for points in $\mathcal{U}_B$ which proves that these orbits are free.

To show density, observe that we have a disjoint union \begin{equation} \label{eq descomposicion}
I_B^{j+1}=(\mathcal{I}\cap I_B^{j+1})\cup C_A^j = (\mathcal{I}\cap I_B^{j+1})\cup U^j_A \cup \left(\bigcup_{a\in A^+} \widetilde{\rho}(a)I_A^{j-1}\cup \bigcup_{a\in A^-} \widetilde{\rho}(a)I_A^{j}\right) \cup \widehat X^j_A  
 \end{equation}
where we just spell out the decomposition given by (\ref{eq union}) and (\ref{eq union2}) in a way that is ready for recursion, decomposing the intervals $I^j_A$ in the analog of (\ref{eq descomposicion}). Notice that $\widehat X_A=\bigcup_j\widehat X^j_A$ is a closed set with empty interior, and so is $\widehat X_B=\bigcup_j\widehat X^j_B$. Using the decomposition (\ref{eq descomposicion}) and its analog for $I^j_A$ recursively, we see that the complement of the orbit of $\mathcal{I}\cup\mathcal{U}$ is the orbit of $\widehat X_A\cup\widehat X_B$, which has empty interior by Baire's theorem.   $\hfill\square$

\vsc
We point out that so far our construction does not require the orders $\preceq_A$ and $\preceq_B$ to be bi-invariant: it works the same in the category of countable left-orderable groups. Thus, applying the construction in this more general setting, we obtain

\begin{cor}\label{coro left-orders} If $(A,\preceq_A)$ and $(B,\preceq_B)$ are countable left-ordered groups, then there is a left-ordering of  $A*B$ extending $\preceq_A$ and $\preceq_B$.

\end{cor}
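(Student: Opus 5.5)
Since Proposition \ref{prop ping pong} gives a faithful action $\widetilde\rho:A*B\to\homeo_+(\R)$ with free orbits, we can read off a left-ordering by comparing images of a suitable sequence of points. We do not need a single reference point, only a countable set of points whose joint orbit data separates the group. Concretely, pick a dense sequence $(x_n)_{n\ge 1}$ in $\R$ with $x_1\in\mathcal I$. Define $\id\prec^* w$ if and only if $x_{n}<\widetilde\rho(w)(x_{n})$, where $n$ is the least index with $\widetilde\rho(w)(x_n)\neq x_n$.

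The key steps are as follows.

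\textbf{Well-definedness and totality.} Since the action is faithful, a nontrivial $w$ moves some point. By continuity and density it then moves some $x_n$, so $n$ exists. In fact, because $x_1\in\mathcal I$ has free orbit, $n=1$ always, and the ordering is simply the one induced by the orbit of $x_1$. This is the standard construction, so we may take $P=\{w: x_1<\widetilde\rho(w)(x_1)\}$.

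\textbf{Semigroup property and left-invariance.} The condition $P\cap P^{-1}=\emptyset$ and $P\cup P^{-1}\cup\{\id\}=A*B$ follow from freeness of the orbit of $x_1$. The ordering $u\prec^* v \iff \widetilde\rho(u)(x_1)<\widetilde\rho(v)(x_1)$ is left-invariant because each $\widetilde\rho(g)$ is increasing.

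\textbf{Extension of $\preceq_A$ and $\preceq_B$.} This is the step needing care. I need a point of $\mathcal I$ that is simultaneously a reference point for $\widetilde\rho(A)$ and for $\widetilde\rho(B)$. By Step 1, every point of $int(I_{x_A})=(1/5,4/5)$ is a reference point for $\rho_A^{blow}$. So every point of $I_A^0=(1/5,4/5)$ is a reference point for $\widetilde\rho(A)$, since $\widetilde\rho_A$ agrees with $\rho_A^{blow}$ on $[0,1]$. Likewise every point of $I_B^{-1}=(1/5,4/5)-1/2=(-3/10,3/10)$ is a reference point for $\widetilde\rho(B)$. The intersection $(1/5,3/10)$ is nonempty and lies in $\mathcal I=\mathcal I_A\cap\mathcal I_B$. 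Choosing $x_1$ there, for $a\in A$ we get $\id\prec_A a\iff x_1<\widetilde\rho(a)(x_1)\iff \id\prec^* a$, and similarly for $B$.

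The expected obstacle is only this alignment of reference points. It is resolved by the explicit choice above. Note that bi-invariance of $\preceq_A$ and $\preceq_B$ is never used, consistent with the remark preceding the corollary.
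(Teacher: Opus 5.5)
Your proposal is correct and, once the dense-sequence detour collapses (as you yourself note, $n=1$ always), it is exactly the paper's argument: order $A*B$ by the free $\widetilde\rho$-orbit of a point $x\in\mathcal I$, which is a reference point for both $\preceq_A$ and $\preceq_B$. Your specific choice $x_1\in(1/5,3/10)=I_A^0\cap I_B^{-1}$ is fine but unnecessary, since every component of $\mathcal I_A$ and $\mathcal I_B$ is a translate of a wandering interval of reference points, so any $x\in\mathcal I$ works.
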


\proof  


For $x\in \mathcal I$,  consider the ordering on $A*B$ defined by $w_1\prec w_2$ if $\widetilde \rho(w_1)(x)  < \widetilde\rho(w_2)(x)$. Since $\mathcal I\subset \mathcal I_A$ (respectively $\mathcal I\subset \mathcal I_B$) we wet that $x$ is a reference point for $\preceq_A$ (respectively $\preceq_B$) and so $\preceq$ extends $\preceq_A$ (respectively $\preceq_B$).
  $\hfill\square$

\vs

To obtain a the similar statement for the biorderable case we need to perform one last modification to $\widetilde\rho$.

\vsc
\noindent \underbar {Step 3:} {\em Cutting the  ping-pong.} 

\vsc

This modification is very simple. Recall that $0$ is a fixed point of $\widetilde\rho(A)$ and $1/2$ a fixed point of $\widetilde\rho(B)$. We just change that $\widetilde\rho$ so that now $\widetilde\rho(A)$ equals the identity on $(-\infty,0]$ (but is kept intact on $[0,\infty)$) and $\widetilde\rho(B)$ equals the identity on $(-\infty,1/2]$ (but is kept intact elsewhere). 
We call $$\rho:A*B\to Homeo_+(\R)$$ the resulting representation. Remark that the action $\rho$ has no longer free orbits. Yet, for any given $w\in A*B$ any point $x\in \R$ sufficiently large satisfies that $\rho(w)(x)=\widetilde\rho(w)(x)$ and  hence  $\rho:A*B\to Homeo_+(\R)$  is also a faithful  action.

\vsc
We now want to use the action $\rho$ to induce a biorder on $A*B$. For this, given any $w\in A*B$, we define $x_w$ as the  infimum of the support of $\rho(w)$, namely
$$x_w:=\inf \{x\in\R\mid \rho(w)(x)\neq x\}.$$
Since $\rho$ is faithful and trivial in $(-\infty,0]$, we have that $x_w$ is a well defined real number for any non-trivial $w\in A*B$. Our main observation is that on the right of $x_w$, the element $\rho(w)$ (is non-trivial and) has a {\em well defined sign}. 

\begin{prop} \label{prop principal} For any non-trivial $w\in A*B$ there is small right-neigborhood $V_w$ of $x_w$ on which $\rho(w)$ acts non-trivially and moreover  either $$x\leq \rho(w)(x) \text{ or } \rho(w)(x)\leq x$$ for every $x\in V_w$.

\end{prop}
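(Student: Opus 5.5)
The plan is to study the right germ of $\rho(w)$ at $x_w$ and to reduce every case to the germ of a single element of $A$ at a global fixed point of $\rho(A)$, or of a single element of $B$ at a global fixed point of $\rho(B)$.

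\textbf{Base case: single letters.} If $\id\prec_A a$, then Remark \ref{rem dynamical real for bi-orders} applies. The blow-up only inserts wandering intervals on which $a$ acts as an increasing affine map between them, so the remark gives $x\le\rho_A^{blow}(a)(x)$ on all of $[0,1]$. Periodic repetition and cutting preserve this inequality. Hence $x\le\rho(a)(x)$ on all of $\R$, and symmetrically $\rho(a)(x)\le x$ for negative $a$. The same holds for $B$. So every non-trivial letter already has a globally constant sign, and the statement holds for $w\in A\cup B$. By Remark \ref{rem extra wandering}, the support of $\rho(a)$ accumulates at $0$, so $x_a=0$; likewise $x_b=1/2$.

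\textbf{First step for general $w$.} Write $w=a_0b_0\cdots a_nb_n$ in reduced form. Every letter of $A$ fixes $0$, and every letter of $B$ is the identity on $(-\infty,1/2]$. By continuity, for $x$ in a small right neighbourhood of $0$, all partial images of $x$ under the suffixes of $w$ stay in $[0,1/2)$, where $B$ acts trivially. On that neighbourhood we therefore get $\rho(w)(x)=\rho(a_0a_1\cdots a_n)(x)$. If $a_0\cdots a_n\neq\id$, then $x_w=0$ and the sign comes from the base case. Otherwise $\rho(w)$ is trivial on $(-\infty,\varepsilon)$ for some $\varepsilon>0$, and we must look further right.

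\textbf{General reduction by conjugation.} Let $p=x_w$. Conjugation transports germs: for any $c$, the element $cwc^{-1}$ has $x_{cwc^{-1}}=\rho(c)(p)$, and its right germ there is conjugate, by the increasing germ of $\rho(c)$ at $p$, to the germ of $w$ at $p$. Conjugation by an increasing map preserves the property ``$x\le f(x)$ on a right neighbourhood''. So I will freely replace $w$ by conjugates. The aim is to choose conjugators so that the germ at $p$ becomes the germ of a single letter at a global fixed point. I would argue by induction on the length of $w$, as follows.

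\begin{itemize}
\item Track the orbit $p=q_0,q_1,\dots$ of $p$ under the successive suffixes of $w$. Since $w$ fixes $p$ and everything to its left, $q_{\rm last}=p$.
\item At a point $q_k$ that is not a global fixed point of the factor of the next letter, that letter is a genuine local homeomorphism. It can be moved to the other end of the word by a cyclic conjugation, which shortens the word.
\item After cyclically reducing as far as possible, I expect every $q_k$ to be a global fixed point of the relevant factor: an integer for $A$, a half-integer for $B$. The ping-pong structure of Proposition \ref{prop ping pong} is what should force this. Points in the interiors $\mathcal I_A$, $\mathcal I_B$ are moved disjointly, and by Proposition \ref{prop ping pong} a returning orbit through them would make $w$ non-trivial on an open set to the left of $p$.
\item Once all $q_k$ are global fixed points, the cut structure from Step 3 makes the argument of the first step apply at $p$ in place of $0$. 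Near an integer $j$ the $B$-letters act trivially to first order, and near a half-integer the $A$-letters do. So the germ of $w$ at $p$ collapses to the germ of a product of letters from a single factor, which has a constant sign by the base case.
\end{itemize}

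\textbf{Main obstacle.} The delicate step is the one just before the collapse: showing that after cyclic reduction the points $q_k$ really are global fixed points of the relevant factor, and that the $A$-letters and $B$-letters decouple on a one-sided neighbourhood. A priori, $p$ could lie in the closure $\widehat X_A\cup\widehat X_B$, where fixed points of letters accumulate and no single factor dominates. There I would use two facts. First, $\rho$ agrees with $\widetilde\rho$ to the right of $0$ except for the cut of $B$ on $[0,1/2]$. Second, the fixed-point sets of elements of $\widetilde\rho(A*B)$ have empty interior (Proposition \ref{prop ping pong}). Together these should show that such a $p$ cannot be the left end of the support unless the cutting is responsible, and the cutting only happens at $0$ and at $1/2$.
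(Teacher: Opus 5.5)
\textbf{There is a genuine gap in the case $x_w>0$.} Your base case and first step are fine. The right germ of $\rho(w)$ at $0$ is that of the product of the $A$-letters, which is the paper's case $a_0\neq\id$ in the decomposition $w=a_0b_n^{a_n}\cdots b_1^{a_1}$. The trouble is that your mechanism for $x_w>0$ rests on claims that are false.

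Take $w=aba^{-1}b'$ with $a\prec_A\id$ and $b,b'\neq\id$.
\begin{itemize}
\item This word is cyclically reduced. A cyclic conjugation shortens a word only when it is not cyclically reduced, so your induction has nothing to act on.
\item Since $\rho(b')$ is the identity on $(-\infty,1/2]$, we have $\rho(w)=\rho(aba^{-1})$ there. Hence $x_w=p:=\rho(a)(1/2)\in(0,1/5)$.
\item The trajectory of $p$ is $p\mapsto p\mapsto 1/2\mapsto 1/2\mapsto p$. The letters $a^{-1}$ and $a$ are applied at $p$ and at $1/2$, and neither point is fixed by $\rho(A)$. So it is not true that, after cyclic reduction, every $q_k$ is a global fixed point of the relevant factor.
\item If every letter were applied at a global fixed point of its factor, the trajectory would be constant. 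Then $p$ would be fixed by both $\rho(A)$ and $\rho(B)$, forcing $p\le 0$. That scenario only recovers $x_w=0$.
\item The ``collapse'' step is also wrong as stated. For $j\geq 1$ the integer $j$ lies in $C_A^j\subset I_B^{j-1}$, a wandering interval for $B$, so every nontrivial $B$-letter moves it.
\end{itemize}

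\textbf{What is missing is a germ-level simplification, not an algebraic one.} In the example the right move is to delete $b'$, because it acts trivially near the point where it is applied; conjugating does not help. The paper formalizes this as the reduction $red(w,x)$ (Definition \ref{def reduction}, Lemma \ref{lem reduction}). Deleting the letters $c_i=b_i^{a_i}$ that are inactive along the trajectory gives a word $\bar w$ with two properties: $\rho(w)=\rho(\bar w)$ on a right neighbourhood of $x$, and $\rho(\bar w)=\widetilde\rho(\bar w)$ on $[x,\infty)$.

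With this tool your ``main obstacle'' heuristic becomes a proof. Fixed-point sets of nontrivial elements of $\widetilde\rho(A*B)$ have empty interior, so the reduced word must change exactly at $x_w$. This forces some trajectory point of $x_w$ to be sent to $1/2$ by some $\rho(a_i^{-1})$, so $x_w$ lies in the $\rho$-orbit of $1/2$ (Lemma \ref{lema soporte}).

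The final step is then a stabilizer argument. After conjugating so that $x_v=1/2$, the reduced word satisfies $\widetilde\rho(\bar v)(1/2)=1/2$. The stabilizer computation in Proposition \ref{prop ping pong} then gives $\bar v\in B\setminus\{\id\}$ (Lemma \ref{lema germen}). Your proposal lacks this identification of the germ with that of a single $B$-letter at $1/2$, and replaces it with the incorrect decoupling claim.
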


Assuming  Proposition \ref{prop principal} we can give the proof of the first part of  Theorem \ref{teo main 2}.

\vsc

{\noindent{\em Proof of Theorem \ref{teo main 2} item $i)$:}  Define 
\begin{equation}\label{eq el cono} P=\{w\in A*B \mid \rho(w)(x)\geq x \text{ for all } x\in V_w\}.\end{equation}
It is routine to check that $P$ is a sub-semigroup of $A*B$ that is disjoint from its inverse $P^{-1}$.  Moreover,  since the condition $\rho(w)(x)\geq x$ in some right-neighborhood of $x_w$ is invariant under conjugation by any homeomorphisms of the line, we have that $P$ is normal as well. We leve details  to the reader.   Finally, the point of Proposition \ref{prop principal} is that it implies that $A*B= P\cup P^{-1}\cup \{id\}$, and so $P$ is the positive cone of a total and bi-invariant order $\preceq^*$   on $A*B$. $\hfill\square$


The proof of Proposition \ref{prop principal} is based on the fact that it holds for elements in the factors, namely:

\begin{rem} \label{rem free orbit under A} If $a\in A\setminus\{\id\}$, and $b\in B\setminus\{\id\}$ we have that \begin{itemize}
\item $x_a=0$ and $\id\prec_A a$ if and only if $x\leq \rho(a)(x)$ for all $0<x$,
\item $x_b=1/2$ and $\id\prec_B b$ if and only if $x\leq \rho(b)(x)$ for all $1/2<x$.
\end{itemize}

Indeed on one hand the facts about the infimum of the support are just the definition of $\rho$ by cutting $\widetilde\rho$.  On the other hand since   $\preceq_A$ and $\preceq_B$ are bi-orders, the rest comes from Remark \ref{rem dynamical real for bi-orders} and the facts that the constructions of blowin up and periodic repetition used for defining $\widetilde\rho_A$ and $\widetilde\rho_B$ preserve the property of Remark \ref{rem dynamical real for bi-orders}.

\end{rem}


In order to show Proposition \ref{prop principal}, note that any non-trivial element $w\in A*B$ can be written  as $w=a_0 b_n^{a_n}\cdots b_1^{a_1}$ where $a_0,\ldots,a_n\in A$, $b_1,\ldots,b_n\in B$, and $b^a:=aba^{-1}$. 
Notice that if $a_0\neq \id$, then  Proposition \ref{prop principal} holds for   true for $w$. 
Indeed since $\rho(a_0)$ is supported on $(0,\infty)$ and  the support of each $\rho(b_i^{a_i})$ is $(\rho(a_i)(\frac{1}{2}),\infty)$, we have that $x_w=0$ and that $\rho(w)$ agrees with $\rho(a_0)$ on a right-neighborhood of $0$.
Therefore for proving Proposition \ref{prop principal}  we need only to focus on the case where $a_0=\id$.   For this we introduce the concept of reduction of  a word $w\in A*B$ of the form $ b_n^{a_n}\cdots b_1^{a_1}$  at a point $x\in \R$.

\begin{dfn} \label{def reduction}
Let $w=c_n\ldots c_1$ where $c_i=a_ib_i{a_i^{-1}}$. For  $x\in\R$, denote by  $\{x_i \}_{i=0}^n$,  the {\bf trajectory} of $x$ under $w$, namely the sequence given by $x_0=x$, and $x_i=\rho(c_i\cdots c_1)(x)$ for $i=1,\ldots,n$, and define the {\bf reduction} of $w$ at $x$, denoted by $red(w,x)$, as the word obtained by removing all the $c_i$  for which $\rho(a_i^{-1})(x_{i-1})<1/2$.
\end{dfn}

For $w=c_n\ldots c_1$ as in Definition \ref{def reduction}, the condition $\rho(a_i^{-1})(x_{i-1})<1/2$ in the definition of $red(w,x)$ is precisely capturing when the trajectory of $x$ remains stationary at time $i$, that is when $x_{i-1}=x_i$.  Observe that in the case $ 1/2\leq \rho(a_i^{-1})(x_{i-1})$ both maps  $\rho(c_i)$ and $\widetilde{\rho}(c_i)$ agrees  on the interval $[x_{i-1},+\infty)$, and in particular $\rho(c_i)(x_{i-1})=\widetilde{\rho}(c_i)(x_{i-1})$.

It is clear that $red(w,x$) takes only finitely many values when $x$ varies: from $red(w,x)=id$ when $x$ is small enough (certainly $red(w,0)=id$) to $red(w,x)=w$ when $x$ is sufficiently large. In the next lemma we observe that these values  may only change  when the trajectory of $x$ steps on the points $\rho(a_1)(1/2),\ldots, \rho(a_n)(1/2)$.


\begin{lem} \label{lem reduction} For $w=c_n\ldots c_1$ where $c_i=a_ib_i{a_i^{-1}}$ and $x\in \R$, there is a  right-neighborhood $V=[x,x+\delta)$ of $x$ such that:
\begin{enumerate}
\item[$i)$] $red(w,x)=red(w,y)$ for $y\in V$.
\item[$ii)$]  $\rho(w)$ and $\rho(red(w,x))$ agree on $V$.
\item[$iii)$]  $\rho(red(w,x))$ and $\widetilde{\rho}(red(w,x))$ agree on $[x,\infty)$.
\end{enumerate}
Furthermore, if  $red(w,y)\neq red(w,x)$ for all $y<x$, then there is $i\in \{1,\ldots,n\}$ such that $x_{i-1}=\rho(a_i)(1/2)$, where $\{x_i \}_{i=0}^n$ is the trajectory of $x$ under $w$.
\end{lem}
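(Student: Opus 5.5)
The plan is to analyse, one letter at a time, how a point just to the right of $x$ moves under $c_1,\dots,c_n$. The key structural fact comes from the cutting in Step 3. For $c_i=a_ib_ia_i^{-1}$, the map $\rho(c_i)$ is $\rho(a_i)\rho(b_i)\rho(a_i)^{-1}$. The map $\rho(b_i)$ is the identity on $(-\infty,1/2]$ and agrees with $\widetilde\rho(b_i)$ on $[1/2,\infty)$. Hence $\rho(c_i)$ is the identity on $(-\infty,\rho(a_i)(1/2)]$. On $[\rho(a_i)(1/2),\infty)$ it agrees with $\rho(a_i)\widetilde\rho(b_i)\rho(a_i)^{-1}$.

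Moreover, $\rho(a_i)$ agrees with $\widetilde\rho(a_i)$ on $[0,\infty)$ and both fix $0$. So on $[\rho(a_i)(1/2),\infty)$ we in fact have $\rho(c_i)=\widetilde\rho(c_i)$. Since $\rho(a_i^{-1})$ is increasing, the condition $\rho(a_i^{-1})(x_{i-1})<1/2$ is equivalent to $x_{i-1}<\rho(a_i)(1/2)$.

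I would argue by induction on $i$, tracking the trajectories $y_i$ of points $y$ near $x$ on the right. Suppose inductively that for $y\in[x,x+\delta_{i-1})$ the points $y_{i-1}$ vary continuously and increasingly, with $y_{i-1}\in[x_{i-1},x_{i-1}+\epsilon)$. Then there are two cases.

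\begin{itemize}
\item If $x_{i-1}<\rho(a_i)(1/2)$, shrink $\delta$ so that $y_{i-1}<\rho(a_i)(1/2)$ as well. Then $c_i$ is deleted at both $x$ and $y$, and it acts trivially at that step.
\item If $x_{i-1}\ge\rho(a_i)(1/2)$, then $y_{i-1}\ge x_{i-1}\ge\rho(a_i)(1/2)$. So $c_i$ is kept at both $x$ and $y$, and $\rho(c_i)$ acts as $\widetilde\rho(c_i)$.
\end{itemize}

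Continuity of the homeomorphisms propagates the neighbourhood to step $i$. After $n$ steps this gives $i)$. It also gives $ii)$: on $V$, applying $\rho(w)$ coincides with applying only the kept letters, since the deleted ones act trivially along every trajectory in $V$.

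For $iii)$, write $red(w,x)=c_{i_k}\cdots c_{i_1}$ with the kept indices. The trajectory of any $z\ge x$ under the reduced word dominates, by monotonicity, the trajectory of $x$ under the reduced word. That trajectory equals the subsequence $x_{i_j-1}$ of the original one, because the removed letters fixed the points. So at each kept step the current point is at least $\rho(a_{i_j})(1/2)$, and there $\rho(c_{i_j})=\widetilde\rho(c_{i_j})$. Composing gives $\rho(red(w,x))=\widetilde\rho(red(w,x))$ on $[x,\infty)$.

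For the final claim I argue by contrapositive. Suppose no $i$ has $x_{i-1}=\rho(a_i)(1/2)$. Then each comparison $x_{i-1}<\rho(a_i)(1/2)$ or $x_{i-1}>\rho(a_i)(1/2)$ is strict. By the same inductive continuity argument, now on a two-sided neighbourhood, the same strict inequalities hold for $y$ slightly less than $x$. Therefore $red(w,y)=red(w,x)$ for some $y<x$.

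The main care point is the bookkeeping in the induction: each step's continuity only uses that $\rho(c_i)$ is a homeomorphism. The one genuinely non-formal fact is the identification $\rho(c_i)=\widetilde\rho(c_i)$ on $[\rho(a_i)(1/2),\infty)$, which rests on the cutting of $A$ at $0$ being harmless because $\rho(a_i)(1/2)>0$.
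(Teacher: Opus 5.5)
Your proposal is correct and follows essentially the same route as the paper: continuity and monotonicity of trajectories give the right-neighbourhood for $i)$ and $ii)$, and the agreement $\rho(c_i)=\widetilde\rho(c_i)$ on $[\rho(a_i)(1/2),\infty)$, together with the fact that removed letters fix the trajectory, gives $iii)$. For the final claim you use a contrapositive (strict inequalities persist on a two-sided neighbourhood), whereas the paper argues directly that a letter kept at $x$ but removed at every $y<x$ forces $\rho(a_i^{-1})(x_{i-1})=1/2$; your version is also more explicit than the paper about why kept letters stay kept to the right of $x$ and why $\rho(c_i)$ and $\widetilde\rho(c_i)$ coincide beyond $\rho(a_i)(1/2)$.
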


{\em Proof:} 
By construction of $red(w,x)$, the points $\{x_i\}_{i=0}^n$ in the trajectory of $x$ under $w$ are the same that the points in the  trajectory of $x$ under $red(w,x)$. In particular we have that $\rho(w)(x)=\rho(red(w,x))(x)$. Since the trajectory of a point depends continuously of the point itself and the condition for removing a letter $c_i$ in the reduction is open, we have that there is $\delta>0$ such that $red(w,x)=red(w,y)$ for every $y\in [x,x+\delta)$, and that $\rho(w)$ and $\rho(red(w,x))$ agree on $[x,x+\delta)$. Finally, as observed above, we have that $\rho(c_i)$ agrees with $\widetilde\rho(c_i)$ on $[x_{i-1},\infty)$ whenever the letter $c_i$ is not removed in the reduction.  In particular, since $red(w,x)$ is already reduced, we have that $\widetilde \rho(red(w,x))$ agree with $\rho(red(w,x))$ on $[x_0,\infty)$.  

Now assume that $red(w,y)\neq red(w,x)$ for all $y<x$. This means that some letter $c_i$ was removed in the reduction on $y$ but not removed in the reduction on $x$. That is $\rho(a_i^{-1})(x_{i-1})\geq 1/2$ but  $\rho(a_i^{-1})(y_{i-1})<1/2$, where $\{y_i\}_{i=0}^n$ denotes the trajectory of $y$ under $w$. Since this holds for every $y<x$ we have that $\rho(a_i^{-1})(x_{i-1})=1/2$ as desired. $\hfill\Box$

\vsc

With the notion of reduction at hand, we are now in position to  find out which points can be attained as $x_w$ for some $w\in A*B$.


\begin{lem} \label{lema soporte} For $w=a_0 b_n^{a_n} \ldots b_1^{a_1}\in A*B$, we have that $x_w=0$ when $a_0\neq \id$ or $x_w$ belongs to the $\rho$-orbit of $1/2$ when $a_0=\id$.
\end{lem}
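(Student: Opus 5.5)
The case $a_0\neq\id$ is already settled in the discussion before Definition \ref{def reduction}: $\rho(a_0)$ is supported on $(0,\infty)$, while each $\rho(b_i^{a_i})$ is supported on $(\rho(a_i)(1/2),\infty)\subset(0,\infty)$. On a right-neighborhood of $0$ the element $\rho(w)$ therefore agrees with $\rho(a_0)$, which is nontrivial arbitrarily close to $0$. Hence $x_w=0$. So I will focus on $w=c_n\cdots c_1$ with $c_i=a_ib_ia_i^{-1}$ and $w\neq\id$. The idea is to compare $x_w$ with the points where the reduction $red(w,\cdot)$ changes, and to use Lemma \ref{lem reduction}.

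First, set $x:=x_w$. By definition $\rho(w)$ is the identity on $(-\infty,x)$ and is nontrivial on every right-neighborhood $[x,x+\epsilon)$. I claim that $red(w,y)\neq red(w,x)$ for all $y<x$. To see this, suppose some $y<x$ has $red(w,y)=red(w,x)=:u$. By Lemma \ref{lem reduction} iii), $\rho(u)$ agrees with $\widetilde\rho(u)$ on $[y,\infty)$. By ii) at $x$, $\rho(w)=\rho(u)$ near the right of $x$, so $u\neq\id$ as a word. Since $\widetilde\rho$ is faithful and its set of fixed points of a nontrivial element has empty interior (Proposition \ref{prop ping pong}), $\widetilde\rho(u)$ moves points in $(y,x)$ arbitrarily close to $x$, or at least somewhere in $(y,x)$. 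The trajectories of points $z\in[y,x]$ under $w$ and under $u$ coincide, because each removed letter fixes the current point for both reductions, assuming the reduction is constant on $[y,x]$. So $\rho(w)(z)=\widetilde\rho(u)(z)\neq z$ for some $z<x$, a contradiction.

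The subtlety is that the reduction need only agree at $y$ and $x$, not on the whole interval $[y,x]$. I handle this by choosing the nearest change point. The reduction is piecewise constant with finitely many values, and by Lemma \ref{lem reduction} i) each constancy region is right-half-open. So let $x'\le x$ be the infimum of the last constancy interval ending at $x$; on $[x',x]$ the reduction is constant. If $x'<x$, the argument above applies with $y\in(x',x)$ and gives the contradiction. Hence the reduction at $x$ differs from the reduction at all $y<x$ close to $x$. In fact, only this local version is needed for the "Furthermore" part of Lemma \ref{lem reduction}, whose proof uses $y$ arbitrarily close to $x$.

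Next I apply the last part of Lemma \ref{lem reduction} to get an index $i$ with $x_{i-1}=\rho(a_i)(1/2)$, where $x_{i-1}=\rho(c_{i-1}\cdots c_1)(x)$. Then $x=\rho\big((c_{i-1}\cdots c_1)^{-1}a_i\big)(1/2)$, which lies in the $\rho$-orbit of $1/2$.

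The main obstacle is the middle step: ensuring that the reduction cannot stay constant across $x_w$ from the left. This requires the empty-interior property of fixed sets of nontrivial elements of $\widetilde\rho$ from Proposition \ref{prop ping pong}, together with care that the reduced word $u$ is nontrivial as an element of $A*B$ and not just as a word. If $\rho(u)$ is nontrivial near the right of $x$, then $u\neq\id$ in $A*B$, so faithfulness suffices.
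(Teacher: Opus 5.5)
Your argument is correct and follows the paper's route: you show that $red(w,\cdot)$ changes at $x_w$ using Lemma \ref{lem reduction} together with the empty-interior statement of Proposition \ref{prop ping pong}, and then apply the last clause of Lemma \ref{lem reduction} to place $x_w$ in the $\rho$-orbit of $1/2$. The one difference is that the paper applies item $ii)$ at $y$ rather than at $x_w$ (since $\rho(w)$ is the identity on $[y,x_w)$, $\rho(red(w,y))$ fixes a right-neighbourhood of $y$, so by $iii)$ and Proposition \ref{prop ping pong} $red(w,y)$ is trivial for \emph{every} $y<x_w$), which makes your nearest-change-point construction unnecessary and gives the hypothesis of that last clause exactly as stated, rather than only along points $y\to x_w^-$ (which, as you note, is all its proof uses).
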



\proof  For $w=a_0 b_n^{a_n}\cdots b_1^{a_1}$ with $a_0\neq \id$ we already observed (after  Remark \ref{rem free orbit under A}) that $x_w=0$. So assume that $w=c_n\ldots c_1$ with $c_i=b_i^{a_i}$.   We claim that $x_w$ satisfies that $red(w,x_w)$ is non-trivial yet $red(w,y)$ is the trivial word for every $y<x_w$. 

Indeed on one hand  $red(w,x_w)$ is a non-triviality word since, by item $ii)$ in Lemma \ref{lem reduction}, $\rho(red(w,x_w))$ agrees with $\rho(w)$ on a right-neighborhood of $x_w$ and so its action is non-trivial.   On the other hand the triviality of $red(w,y)$ for every $y<x_w$ follows since, by item $i)$ in Lemma \ref{lem reduction}, we have that every point in a right-neighborhood of $y$ is fixed by $\rho(red(w,y))$. But by item $iii)$ in Lemma \ref{lem reduction}, this implies that $\widetilde\rho(red(w,y))$ has an interval of fixed points which, by Proposition \ref{prop ping pong}, implies that $red(w,y)$ is the trivial word.

By  the claim and the last part of Lemma \ref{lem reduction} and,  we get that some point in the trajectory of $x_w$ by  $w$ is in the $\rho$-orbit of $1/2$.  Thus $x_w$ itself is in the $\rho$-orbit of $1/2$. $\hfill\Box$




\begin{lem} \label{lema germen} Let $w\in A*B$ be such that $x_w=1/2$. Then there is a right-neighborhood $V$ of $1/2$ where $\rho(w)$ agrees with $\rho(b)$ for some $b\in B\setminus\{\id\}$.
\end{lem}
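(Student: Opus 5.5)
The plan is to reduce $w$ at the point $1/2$ and then use the stabilizer statement of Proposition \ref{prop ping pong}. First, since $x_w=1/2\neq 0$, Lemma \ref{lema soporte} (or rather the observation following Remark \ref{rem free orbit under A}) forces $a_0=\id$. So $w=c_n\cdots c_1$ with $c_i=b_i^{a_i}$, and the reduction machinery of Definition \ref{def reduction} applies.

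Let $w'=red(w,1/2)$, viewed as an element of $A*B$. By Lemma \ref{lem reduction}, there is $\delta>0$ such that $\rho(w)$ and $\rho(w')$ agree on $V=[1/2,1/2+\delta)$. The same lemma also shows that $\rho(w')$ and $\widetilde\rho(w')$ agree on $[1/2,\infty)$.

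Next I would show that $w'$ fixes $1/2$ under $\widetilde\rho$. Since $1/2=x_w$ is the infimum of the support of the homeomorphism $\rho(w)$, every point to its left is fixed, and by continuity $\rho(w)(1/2)=1/2$. Hence
$$\widetilde\rho(w')(1/2)=\rho(w')(1/2)=\rho(w)(1/2)=1/2.$$
By the last assertion of Proposition \ref{prop ping pong}, this gives $w'\in B$; write $w'=b$.

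Finally, $b\neq\id$. Otherwise $\rho(w)$ would be the identity on $V$, a right-neighborhood of $x_w$, contradicting the definition of $x_w$ as the infimum of the support. Thus on $V$ we have $\rho(w)=\rho(w')=\rho(b)$ with $b\in B\setminus\{\id\}$, as claimed.

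The main point needing care is that $red(w,1/2)$ is treated as a group element. Its $\widetilde\rho$-image then depends only on that element, and faithfulness of $\widetilde\rho$ together with the stabilizer computation in Proposition \ref{prop ping pong} legitimately identify it with an element of $B$. The equality $\rho(w')=\rho(b)$ is automatic, since $\rho$ is a homomorphism on $A*B$.
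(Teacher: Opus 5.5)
Your proposal is correct and follows essentially the same route as the paper. Both reduce $w$ at $1/2$ and use Lemma \ref{lem reduction} to match the germ of $\rho(w)$ with that of $\widetilde\rho(red(w,1/2))$. Both then invoke the stabilizer statement of Proposition \ref{prop ping pong} to land in $B$, and rule out the trivial element via the definition of $x_w$.

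You also make explicit two points the paper leaves implicit: that $a_0=\id$, so the reduction is defined, and why $\widetilde\rho(red(w,1/2))$ fixes $1/2$, namely continuity of $\rho(w)$ at the infimum of its support.
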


\proof Let $\overline{w}=red(w,{1/2})$. Then, by Lemma \ref{lem reduction}, we have that  $\rho(w)$ agrees with $\rho(\overline{w})$ and with $\widetilde{\rho}(\overline{w})$ in a 
right-neighborhood $V$ of $1/2$. Since $\widetilde{\rho}(\overline{w})$ fixes $1/2$, from Proposition \ref{prop ping pong},  we get that $\overline{w}\in B$. It must be non-trivial, for otherwise it would be $x_w>1/2$. $\hfill\Box$

\vsc

\noindent {\em Proof of Proposition \ref{prop principal}:}  Let $w=a_0 b_n^{a_n} \ldots b_1^{a_1}$. If $a_0\neq \id$, we already pointed out  after Remark \ref{rem free orbit under A} that $x_w=0$ and that $\rho(w)$ agrees with $\rho(a_0)$ on a right-neighborhood of $0$. 
Otherwise, by Lemma \ref{lema soporte}, we have that $x_w$ is in the orbit of $1/2$, thus it is conjugate to an element $v$ with $x_v=1/2$, and Lemma \ref{lema germen} allows us to conclude that there is $b\in B\setminus\{\id\}$ and a right-neighborhood of $x_w$ where $\rho(w)$ agrees with a conjugate of $\rho(b)$ in a right-neighborhood of $1/2$. Thus by Remark \ref{rem free orbit under A} we finish the proof.  
$\hfill\Box$ 

\vsc

\subsection{Invariance under automorphisms}
\label{sec invariance}
Let $\varphi=f\ast g$ where $f:A\to A$ and $g:B\to B$ are  automorphisms that preserves the bi-orders $\preceq_A$ and $\preceq_B$ from Theorem \ref{teo main 2}. We wish to show that the order $\preceq^*$ on $A\ast B$ constructed in item $i)$ of  Theorem \ref{teo main 2} is preserved by $\varphi$. 

For this we first show in Lemma \ref{lem functor} that the action $\widetilde\rho\circ \varphi$ is topologically-conjugated to the action $\widetilde\rho$, and then show that the same happens for $\rho\circ \varphi$ and $\rho$.   The construction of this conjugation is interesting in its own right and resonates with the constructions of \cite{bludov-glass} and \cite{rivas_higman}.

\begin{lem}\label{lem functor} There is  an increasing homeomorphism $h:\R\to\R$ that fixes  $\frac{1}{2}\Z$ pointwise and conjugates the  $\widetilde{\rho}$ action with the $\widetilde{\rho}\circ\varphi$ action, namely $$h\circ\widetilde{\rho}(w)\circ h^{-1}(x)=\widetilde{\rho}(\varphi(w))(x) \,\,\mbox{ for every }w\in A\ast B,\text{ and every }x\in \R.$$

\end{lem}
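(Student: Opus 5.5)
We build $h$ by combining the two factor-level conjugations from Remark \ref{rem automorfismo blow} and then checking that they glue into one homeomorphism of $\R$. Apply Remark \ref{rem automorfismo blow} to $f$ and $g$. This gives $h_A,h_B\in\homeo_+([0,1])$ with
\[
h_A\circ\rho_A^{blow}(a)\circ h_A^{-1}=\rho_A^{blow}(f(a)), \qquad h_B\circ\rho_B^{blow}(b)\circ h_B^{-1}=\rho_B^{blow}(g(b)).
\]
Each of these maps is affine from the wandering interval $\rho^{blow}(c)I_x$ onto $\rho^{blow}(\varphi(c))I_x$, and is the identity on $I_{x_A}=I_{x_B}=[1/5,4/5]$. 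Let $\widetilde h_A$ be the $1$-periodic extension of $h_A$ to $\R$, and let $\widetilde h_B$ be the periodic extension of $h_B$ on the translates $[n,n+1]+1/2$. Then $\widetilde h_A$ conjugates $\widetilde\rho_A$ to $\widetilde\rho_A\circ f$, and $\widetilde h_B$ conjugates $\widetilde\rho_B$ to $\widetilde\rho_B\circ g$.

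The desired $h$ must do two things. On $\mathcal C_A$ (which lies inside $\mathcal I_B$) it should act like $\widetilde h_A$, since that is where the $A$-images of $\mathcal I_A$ land. On $\mathcal C_B$ it should act like $\widetilde h_B$. The tension is that on $\mathcal C_A$ the $B$-action is not the identity, so the definition must be propagated along the ping-pong tree. We use the recursive decomposition (\ref{eq descomposicion}) and define $h$ level by level:
\begin{itemize}
\item[(0)] On $\mathcal I$, set $h=\id$.
\item[(1)] On each piece $\widetilde\rho(a)I_A^{j}\subset X_A$ with $a\neq\id$, set
\[
h=\widetilde\rho(f(a))\circ h|_{I_A^{j}}\circ\widetilde\rho(a)^{-1}.
\]
On the complementary parts $U_A^j$, use $\widetilde h_A$. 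These parts are wandering intervals moved affinely, so they are compatible.
\item[(1$'$)] Do the same for $B$ using $g$ and $\widetilde h_B$.
\item[(2)] Recurse: $h$ on $I_A^j$ is determined by its values on $\mathcal I\cap I_A^j$ and on the $B$-pieces inside $C_B^j\subset I_A^j$, and symmetrically. Equivalently, on the orbit of $\mathcal I\cup\mathcal U$ we set
\[
h(\widetilde\rho(w)x)=\widetilde\rho(\varphi(w))(h_0 x),
\]
where $h_0$ is the identity on $\mathcal I$ and is $\widetilde h_A$ or $\widetilde h_B$ on $\mathcal U_A$ or $\mathcal U_B$ respectively.
\end{itemize}
This formula is well defined because the points of $\mathcal I\cup\mathcal U$ have free orbits and lie in distinct orbits unless related by the factor actions (Proposition \ref{prop ping pong}). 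The conjugation identity then holds on this set by construction. The points of $\frac12\Z$ lie in the global fixed sets (the fixed points separating wandering intervals, which sit in $\mathcal I_B$ or $\mathcal I_A$ interiors). Those fixed by $A$ are fixed by $\widetilde h_A$, so $h$ fixes $\frac12\Z$.

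Next comes monotonicity. On the orbit of $\mathcal I\cup\mathcal U$, the map $h$ is an order-preserving bijection onto the same set. To see this, compare two points by their positions in the nested decomposition. At each level the pieces appear in the order dictated by the signs of $a$ and $b$ via (\ref{eq union}): positive elements push $I_A^{j-1}$ into the left half of $C_A^j$ and negative ones push $I_A^j$ into the right half. The $\preceq_A$-order of these pieces is preserved because $f$ preserves $\preceq_A$, and $\widetilde h_A$ is increasing. An induction on word length then shows that $h$ preserves the order between any two pieces.

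Finally we extend $h$ to all of $\R$. The domain is dense (Proposition \ref{prop ping pong}) and so is the image, because it is the same set. An increasing bijection between dense subsets of $\R$ extends uniquely to an increasing homeomorphism. The conjugation identity then passes to the closure by continuity, giving $h\circ\widetilde\rho(w)\circ h^{-1}=\widetilde\rho(\varphi(w))$ on all of $\R$.

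The main obstacle is the order-preservation check in the nested decomposition, together with the requirement that the image of the dense set is again dense and has no gaps. We expect to handle it by showing that $h$ maps each piece of level $k$ in the recursion onto the corresponding piece indexed by $\varphi$. The set $\widehat X_A\cup\widehat X_B$ and its orbit then correspond exactly to each other, since they are obtained as intersections of nested unions of pieces. This is what guarantees both bijectivity and continuity.
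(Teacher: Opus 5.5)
Your argument is correct, but it takes a different route from the paper.

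\textbf{How the two constructions differ.} The paper never works on a dense set. It starts from $h_0=\widetilde h_B\circ\widetilde h_A$ and builds homeomorphisms $h_k$. Each $h_k$ agrees with $h_{k-1}$ off the nested sets $\mathcal C_k$, and on each component $\widetilde\rho(w)C^j_B$ with $w\in W^A_k$ it is redefined as $\widetilde\rho(\varphi(w))\circ\widetilde h_B\circ\widetilde\rho(w^{-1})$ (symmetrically for $B$). The slope bound $\frac13$ for nontrivial elements on $\mathcal I_A$ and $\mathcal I_B$ makes the components of $\mathcal C_k$ have length at most $\frac25(\frac13)^k$. Hence $h_k$ and $h_k^{-1}$ are uniformly Cauchy, the limit is a homeomorphism, and the conjugacy is checked level by level.

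You instead impose equivariance on the invariant set $O=\widetilde\rho(A*B)(\mathcal I\cup\mathcal U_A\cup\mathcal U_B)$. The union is what the paper's $\mathcal U$ must mean, since $\mathcal U_A\cap\mathcal U_B=\emptyset$. With this choice:
\begin{itemize}
\item the conjugacy identity is automatic;
\item well-definedness reduces to the orbit structure together with $\widetilde h_A\circ\widetilde\rho(c)=\widetilde\rho(f(c))\circ\widetilde h_A$ and $\widetilde h_A(\mathcal U_A)=\mathcal U_A$;
\item the real work moves to monotonicity.
\end{itemize}

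\textbf{Monotonicity.} Your induction does handle it, once written out in two stages. First, show by induction on word length that $h$ maps $O\cap I^j_A$ into $I^j_A$ (likewise for $I^j_B$). In the same induction, show that $h$ maps the $O$-points of each level-one piece of $C^j_A$ into that piece's $\widetilde h_A$-image, where a level-one piece is either some $\widetilde\rho(a)I^{j'}_A$ or a component of $U^j_A$. No point of $O$ lies in $\widehat X_A$, so every $O$-point of $C^j_A$ lies in such a piece. Since $\widetilde h_A$ is increasing, it preserves the relative order of these pieces. Second, compare two points lying in the same piece $\widetilde\rho(a)I^{j'}_A$ via $h(y)=\widetilde\rho(f(a))\,h(\widetilde\rho(a)^{-1}y)$, inducting on total word length.

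\textbf{Extension.} Since $h(O)=O$ is dense, the worry in your last paragraph about matching $\widehat X_A\cup\widehat X_B$ is unnecessary. The extension lemma for increasing bijections between dense sets already gives a homeomorphism. In fact both routes produce the same $h$: the paper's limit is equivariant and equals your $h_0$ on $\mathcal I\cup\mathcal U_A\cup\mathcal U_B$.

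\textbf{What each route buys.} Your route needs no estimate inside the lemma. But the metric input has only moved into the density of $O$, which you quote from Proposition \ref{prop ping pong}. Points whose ping-pong itinerary never terminates lie neither in $O$ nor in the orbit of $\widehat X_A\cup\widehat X_B$. What shows they cannot fill an interval is the shrinking of nested pieces, which is the same $\frac13$-contraction the paper uses for its Cauchy estimate.

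\textbf{One justification needs repair.} The integers are not in $O$, since $j\in\widehat X^j_A$. Near $j$ your $h$ is not $\widetilde h_A$, so ``fixed by $\widetilde h_A$'' does not give $h(j)=j$. Argue instead as follows. Since $h$ is the identity on $\mathcal I$, by continuity it fixes $j\pm\frac15$ and preserves $C^j_A$. It conjugates $\widetilde\rho(A)$ to itself. Since $j$ is the only point of $C^j_A$ fixed by all of $\widetilde\rho(A)$, this forces $h(j)=j$. The same argument with $B$ gives $h(j+\frac12)=j+\frac12$.
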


\begin{proof}

 

By Remark \ref{rem automorfismo blow} there is an increasing homeomorphism  $h_A$ that conjugates $\rho_A^{blow}$ to $\rho_A^{blow}\circ f$. Extend it by repetition to $\widetilde h_A:\R\to\R$, so that $\widetilde h_A$ equals $T_{n} \circ h_A \circ T_{n}^{-1}$ on $[n,n+1]$. Analogously, take $h_B$ conjugating $\rho_B^{blow}$ to $\rho_B^{blow}\circ g$ and extend it to $\widetilde h_B:\R\to\R$ so that on $[n, n+1]+1/2$ it equals $T_{n+1/2} \circ h_B \circ T_{n+1/2}^{-1}$.

We shall construct $h$ as the uniform limit of a sequence of maps $h_k$ which will be defined recursively. Recall the notations in the construction of $\widetilde\rho$ (Step 2 in \S \ref{sec countable}) leading to Proposition \ref{prop ping pong}. Notice that $\widetilde h_A$ fixes every point in $\mathcal{I}_A$ and preserves $C^j_A$ for every $j\in\Z$. The same goes for $B$.

First let $h_0=\widetilde h_B\circ\widetilde h_A$. Note that $\widetilde h_A$ and $\widetilde h_B$ commute and $h_0$ fixes every point in $\mathcal{I}$. Also that $h_0$ preserves $C^j_A$ and $C^j_B$ for each $j$. By the observations after Remark \ref{rem automorfismo blow} we see that $h_0$ maps $$\widetilde{\rho}(a)I_A^{j}\to \widetilde{\rho}(f(a))I_A^{j} \mbox{ for every } a\in A,a\neq 1,j\in\Z$$ $$\widetilde{\rho}(b)I_B^{j}\to \widetilde{\rho}(g(b))I_B^{j} \mbox{ for every } b\in B,b\neq 1,j\in\Z$$ by affine maps. Clearly $\frac{1}{2}\Z$ is fixed by $h_0$, and for $x\in\mathcal{I}$ and $c\in A\cup B -\{1\}$ we have $\widetilde{\rho}(\varphi(c))\circ h_0(x)=h_0\circ\widetilde{\rho}(c)(x) $.

For $w\in A\ast B$ let $|w|$ be its length as a reduced word with letters in the non-trivial elements of $A$ and $B$. Denote $W_n=\{w\in A\ast B:|w|=n\}$ and $W_{\leq n}=\{w\in A\ast B:|w|\leq n\}$. Let $W^A$ and $W ^B$ be the words with last letter in $A$ or $B$ respectively, and put $W^A_n=W^A\cap W_n$ and $W^A_{\leq n}=W^A\cap W_{\leq n}$, and the same for $B$. Consider $$\mathcal{C}_k = \left(\bigcup_{w\in W^A_k,j\in\Z}\widetilde\rho(w)C_B^j \right)\cup\left(\bigcup_{w\in W^B_k,j\in\Z}\widetilde\rho(w)C_A^j \right) $$
Using that $C_A^j\subset I_B^{j-1}$ and $C_B^j\subset I_A^j$ and (\ref{eq union}) we see that $\mathcal{C}_{k+1}\subset \mathcal{C}_k$.




We define $h_k$ recursively so that it agrees with $h_{k-1}$ on $\R\setminus \mathcal{C}_{k}$ and we detail the construction on $\mathcal{C}_{k-1}$ as follows: If $w\in W_k^A$, by induction $h_{k-1}$ takes $\widetilde\rho(w)C_B^j$ to $\widetilde\rho(\varphi(w))C_B^j$ by an affine map, that agrees with $\widetilde\rho(\varphi(w)w^{-1})$ on this interval. We define $h_k$ in $\widetilde\rho(w)C_B^j$ as $$\widetilde\rho(\varphi(w))\circ \widetilde h_B \circ \widetilde\rho(w^{-1}):\widetilde\rho(w)C_B^j\to \widetilde\rho(\varphi(w))C_B^j$$ Notice that if $b\in B\setminus\{\id\} $ then $wb\in W_{n+1}^B$ and $h_k$ takes $\widetilde\rho(wb)I_B^j$ to $\widetilde\rho(\varphi(wb))I_B^j$ by an affine map which equals $\widetilde\rho(\varphi(wb)b^{-1}w^{-1})$ on the interval, conjugating the behaviour of $\widetilde h_B$. As $C^j_A\subset I^j_B$, we get for $h_k$ what we assumed by induction for $h_{k-1}$. Reversing the roles of $A$ and $B$ we define $h_k$ on intervals of the form $\widetilde\rho(w)C_A^j$ for $w\in W_k^B$. 

The fact that $h_k$ are increasing homeomorphisms can be obtained by induction, moreover $h^{-1}_k$ is obtained from the the same construction applied to $\varphi^{-1}$. It is clear that that $\frac{1}{2}\Z$ is fixed by $h_k$: it is fixed by $h_0$ and disjoint from $\mathcal{C}_k$ for $k\geq 1$. Also, it can can be shown by induction that for $i\leq k$, we have $$\widetilde{\rho}(\varphi(w))h_k(x)=h_k(\widetilde{\rho}(w)x ) \mbox{ for } w\in W_{\leq i},\mbox{ and }x\notin \mathcal{C}_{k-j}.$$



We show now that $\{h_k\}$ is a Cauchy sequence with respect to the uniform norm. If $n<m$, then $h_n$ and $h_m$ differ only on $\mathcal{C}_n$, and for $x\in \mathcal{C}_n$ we see that $h_n(x)$ and $h_m(x)$ are in the same component of $\mathcal{C}_n$. Notice that for $a\in A-\{1\}$, the slope of $\widetilde{\rho}(a)$ on $\mathcal{I}_A$ is bounded by $1/3$, and the same is true for the slope of $\widetilde{\rho}(b)$ on $\mathcal{I}_B$ for $b\in B-\{1\}$. Thus the length of the components of $\mathcal{C}_n$ are bounded by $(2/5)(1/3)^n$ and we have $$||h_n-h_m ||_{\infty}\leq \frac{2}{5}\left(\frac{1}{3}\right)^n  \quad \mbox{for } n\leq m $$




So $h_k$ has a uniform limit $h$ that is continuous and increasing. Since $h_k^{-1}$ is also a Cauchy sequence, we have $h_k^{-1}\to h^{-1}$, so $h$ is a homeomorphism. Conditions (1) and (2) now follow directly.  
\end{proof}

\noindent {\em Proof of  Theorem \ref{teo main 2} item $ii)$}: From Lemma \ref{lem functor}, we know that there is an increasing homeomorphisms of the line $h$ that conjugates the representations $\widetilde\rho$ and $\widetilde \rho(\varphi)$  of $ A*B$.  By definition of $\rho$ in Step 3 above, we have that $\rho(A)$ agrees with $\widetilde \rho(A)$ on $[0,\infty)$ (and is trivial  elsewhere), so  $h$ conjugates $\rho(A)$ with $\rho(\varphi(A))$ since $h(0)=0$. Analogously, since $h(1/2)=1/2$ we have that  $h$ also conjugates $\rho(B)$ with $\rho(\varphi(B))$.  In particular, $h$ conjugates $\rho(A*B)$ with $\rho(\varphi(A*B))$, that is 
$$h\circ {\rho}(w)\circ h^{-1}(x)={\rho}(\varphi(w))(x)$$
for every  $w\in A*B$ and every $x\in \R$.   But as pointed out  in the proof of Theorem \ref{teo main 2} item $i)$,   the condition $\rho(w)(x)\geq x$ in some right-neighborhood of $x_w$ is invariant under conjugation by any increasing homeomorphisms of the line. In particular $\varphi(w)$ belongs to the positive cone $P$ from (\ref{eq el cono})  whenever $w$ is in $P$. This certainly implies that $\varphi$ preserves the bi-order $\preceq^*$. $\hfill\Box$




\section{The uncountable case}
\label{sec uncountable}

It is well known that (left or bi) orderability is a local property.  This principle is manifested in different theorems relating the orderability of an ambient group with the orderability of its finitely generated subgroup or even its finitely generated sub-semigroups. See the result of Burns and Hale \cite{burns-hale} for the former case and the results of Lo\'s \cite{los} and Onishi \cite{onishi} for the latter.   Burns-Hale theorem from \cite{burns-hale} has been generalize to several of other settings related to orderability, see the work of Clay \cite{clay-bh} for an account of some of them,  and the technique from Lo\'s and Onishi led the second author to obtain a local criterion for {\em not allowing an isolated ordering} \cite{rivas free}. In \cite{navas},  Navas  interpreted these classical results as a consecuence of the compactness of the space of orderings.

In this section we provide another variation of this general principle by showing that a group supports an ordering invariant under a given automorphisms if and only if all its countable subgroups supports an ordering invariant under the automorphism.

To be more precise, given a group $G$, a subgroup $H\leq G$ and an automorphisms $\varphi:G\to G$, we denote by $H^\varphi=\langle \varphi^n(H)\mid n\in \Z\rangle$ the group generated by $H$ and all its images under $\varphi$. Note that  $H^\varphi$ is countable whenever  $H$ is countable.  Clearly, every $\varphi$-invariant bi-order of $G$ induces a $\varphi$-invariant bi-order on every subgroup $H^\varphi$ of $G$. For the converse we have

\begin{thm}\label{teo invariant Burns Hale}Let $G$ be a  group, and $\varphi:G\to G$ be an automorphism.
Assume that for every countable subgroup $H\leq G$, there is a bi-order of $H^\varphi$ that is invariant under $\varphi$.  Then $G$ supports a bi-order that is inveriant under $\varphi$.
\end{thm}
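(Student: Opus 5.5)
We prove Theorem \ref{teo invariant Burns Hale} by a compactness argument in the space $\mathcal{PBO}(G)\subseteq\{0,1\}^G$ of \S\ref{sec PBO}. The plan is to exhibit a bi-order of $G$ as a point in the intersection of a family of closed subsets that has the finite intersection property.

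For each finite subset $F\subseteq G$, let $H_F=\langle F\rangle$, which is countable, and consider $H_F^\varphi$, which is countable and $\varphi$-invariant. Define $\mathcal{K}_F\subseteq\{0,1\}^G$ to be the set of characteristic functions $\chi_P$ of subsets $P\subseteq G$ with the following properties.
\begin{enumerate}
\item[(a)] $P$ is a partial bi-order of $G$, that is, $P$ satisfies $(o1)$ and $(o2)$.
\item[(b)] $\varphi(P)=P$.
\item[(c)] For every $g\in F\setminus\{\id\}$, either $g\in P$ or $g^{-1}\in P$.
\end{enumerate}
Each of these conditions involves only finitely many coordinates at a time. For example, $\varphi(P)=P$ is the conjunction over $g\in G$ of $\chi_P(g)=\chi_P(\varphi(g))$, and condition (c) is a finite disjunction. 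Hence $\mathcal{K}_F$ is closed in $\{0,1\}^G$ and therefore compact. Moreover, $\mathcal{K}_{F_1}\cap\cdots\cap\mathcal{K}_{F_m}\supseteq\mathcal{K}_{F_1\cup\cdots\cup F_m}$.

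The main step is to show that each $\mathcal{K}_F$ is nonempty. By hypothesis, $H_F^\varphi$ carries a bi-order with positive cone $Q$ satisfying $\varphi(Q)=Q$. Regard $Q$ as a subset of $G$. Then $QQ\subseteq Q$, $Q\cap Q^{-1}=\emptyset$, $\varphi(Q)=Q$, and every nontrivial element of $F$ lies in $Q\cup Q^{-1}$.

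The obstacle is normality in $G$: we need $gQg^{-1}=Q$ for all $g\in G$, and this fails in general. I would try the following fixes, in order.
\begin{enumerate}
\item[1.] Weaken condition (a) to a local version. Require $(o1)$ only for products and conjugates by elements that stay inside some fixed finitely generated piece, and encode this as closed conditions indexed by finite sets. The family $\{\mathcal{K}_F\}$ then still has the finite intersection property.
\item[2.] Take a point $\chi_P$ in the intersection of all the $\mathcal{K}_F$. Every finite configuration $g_1,g_2,h$ lies in some $H_F^\varphi$ (take $F=\{g_1,g_2,h\}$). On that subgroup, a witness for $\mathcal{K}_F$ restricts to a genuine normal cone. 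Hence the limit $P$ satisfies $(o1)$, $(o2)$, $(o3)$ and $\varphi(P)=P$ globally.
\end{enumerate}
Concretely, I would define $\mathcal{K}_F$ as the set of $\chi_P$ such that $P\cap H_F^\varphi$ is the positive cone of a $\varphi$-invariant bi-order of $H_F^\varphi$. This is closed, since it is an intersection of finitary conditions on the coordinates in the countable set $H_F^\varphi$. It is nonempty: extend $Q$ by $0$ outside $H_F^\varphi$. And $\mathcal{K}_{F\cup F'}\subseteq\mathcal{K}_F\cap\mathcal{K}_{F'}$, because $H_F^\varphi\subseteq H_{F\cup F'}^\varphi$ and the restriction of a $\varphi$-invariant bi-order to a $\varphi$-invariant subgroup is again one.

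By compactness of $\{0,1\}^G$, $\bigcap_F\mathcal{K}_F\neq\emptyset$. Take $P$ in this intersection. For any $g,h\in G$, set $F=\{g,h\}$. Inside $H_F^\varphi$, the set $P$ is a normal, total, $\varphi$-invariant cone, so $gh\in P$ whenever $g,h\in P$, $hgh^{-1}\in P$ whenever $g\in P$, trichotomy holds for $g$, and $\varphi(g)\in P$ if and only if $g\in P$. Therefore $P$ is the positive cone of a $\varphi$-invariant bi-order on $G$.
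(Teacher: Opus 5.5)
Your proof is correct. It uses the same compactness and finite-intersection argument as the paper, but your choice of closed sets is better justified than the paper's.

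The paper takes $\chi(H^\varphi)$ inside $\mathcal{PBO}(G)$, that is, cones normal under conjugation by \emph{all} of $G$. It then asserts that $\chi(H^\varphi)$ is non-empty ``by hypothesis''. The hypothesis, however, only supplies a cone $Q\subseteq H^\varphi$ that is normal under conjugation by $H^\varphi$. Extending such a $Q$ by zero need not give an element of $\mathcal{PBO}(G)$. For example, in $\Z^2\rtimes_A\Z$ with $A$ hyperbolic with positive eigenvalues, a bi-order of $\Z^2$ lies under a $G$-normal cone only if it is $A$-invariant, and most bi-orders of $\Z^2$ are not. The only evident source of elements of $\chi(H^\varphi)$ is the theorem's conclusion itself.

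Your concrete definition of $\mathcal{K}_F$ avoids this. You work in $\{0,1\}^G$ and impose the cone axioms and $\varphi$-invariance only on coordinates in $H_F^\varphi$. Non-emptiness is then genuinely immediate, by extending $Q$ by zero. Global normality, totality and $\varphi$-invariance are recovered in the limit, because every relevant configuration, such as $gh$, $hgh^{-1}$ or $\varphi(g)$, lies in some $H_F^\varphi$. So your route is a correctly justified version of the paper's argument.

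When you write it up, discard the first definition of $\mathcal{K}_F$ via (a)--(c) and the sketch ``fixes 1--2''. Condition (a) there is exactly the global normality requirement that breaks non-emptiness. Only your final, relative version of $\mathcal{K}_F$ is needed.
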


\proof This follows by  compactness of $\mathcal{PBO}(G)$, the set of partial bi-orderings on $G$ (see \S \ref{sec PBO}). Indeed, for every countable $H\leq G$, consider $\chi(H^\varphi)$ to be the set of all partial bi-orderings on $G$ whose restriction to $H^\varphi$ is a total and $\varphi$-invariant  bi-ordering. The set $\chi(H^{\varphi})$ is a closed subset of $\mathcal{PBO}(G)$ which is non-empty by hypothesis. In addition,  a finite intersection
$\chi(H_1^{\varphi}) \cap \ldots \cap \chi(H_n^{\varphi})$ of these subsets is non-empty since it contains $\chi(\langle H_1, \ldots , H_n\rangle^\varphi)$, where $\langle H_1, \ldots , H_n\rangle$ is the (countable) group generated by $H_1,\ldots, H_n$. By the finite intersection property of compact sets, we get that $\chi:=\bigcap_H \chi(H^{\varphi})$,  where $H$ run over all countable subgroups of $G$, is non-empty. Happily,  any element in $\chi$ is  a total bi-order of $G$ which, moreover, is $\varphi$-invariant.   $\hfill\Box$

\vsc

With Theorem \ref{teo invariant Burns Hale} at hand, we can finish the proof of our main result.

\vsc

\noindent{\em Proof of Theorem \ref{teo main}:} Let $G=A*B$ be the free product of two bi-orderable  group $A$ and $B$, and let $H\leq A*B$ be a countable subgroup. Clearly $H$ is contained in the free product of two countable subgroup $A_{count}*B_{count}$: it is enough to take $A_{count}$ to be the group generated by all the $A$-letters in the words $w\in H$, and $B_{count}$ the analog group but for $B$-letters.   Furthermore, if $\varphi=f*g:G\to G$ is the product of the automorphisms  $f:A\to A$ and $g:B\to B$, then  
$$H^{f*g}\leq  (A_{count}*B_{count})^{f*g}= A_{count}^f * B_{count}^g.$$

Now if $\preceq_A$ and $\preceq_B$ are bi-orderings of $A$ and $B$ that are invariant under $f$ and $g$ respectively, then Theorem \ref{teo main 2} implies that $A^f_{count}*B^g_{count}$ admits a bi-orderings that is invariant under $f*g$. Hence the same holds fof $H^{f*g}$. Thus, applying  Theorem \ref{teo invariant Burns Hale} we obtain that $A*B$ supports a bi-ordering that is invariant under $f*g$. $\hfill\square$


\vs\vs

{\em  Juan Alonso} \hfill {\em Crist\'obal Rivas}

Centro de Matem\'atica, Facultad de Ciencias  \hfill Dpto. de Matemáticas

Universidad de la Rep\'ublica\hfill Universidad de Chile

Igu\'a 4225, Montevideo, Uruguay. \hfill Las Palmeras 3425, Santiago, Chile.

juan@cmat.edu.uy \hfill cristobalrivas@u.uchile.cl

\end{document}